\newcommand{\R}{{\mathbb{R}}}
\newcommand{\N}{{\mathbb{N}}}
\newtheorem{definition}{Definition}
\newtheorem{theorem}{Theorem}
\newtheorem{lemma}{Lemma}
\newtheorem{proposition}{Proposition}
\newtheorem{remark}{Remark}
\begin{document}

\title[Linearization of a nonautonomous unbounded system]{Linearization of a nonautonomous unbounded system with nonuniform contraction: A Spectral Approach}
\author{ I. Huerta }
\address[I. Huerta]{Departamento de Matem\'atica y C.C, Facultad de Ciencia, Universidad de Santiago de Chile
Casilla 307, Correo 2, Santiago, Chile}
\email{ignacio.huerta@usach.cl}
\thanks{This research has been partially supported  FONDECYT Regular 1170968 and CONICYT-PCHA/2015-21150270}
\subjclass[2010]{34D09, 37D25, 37B55}
\keywords{Nonuniform dichotomy spectrum,  nonuniform hyperbolicity, topological equivalence}
\date{\today}

\maketitle

\begin{abstract}
For a nonautonomous linear system with nonuniform contraction, we construct a topological equivalence between this system and an unbounded nonlinear perturbation. This topological equivalence is constructed as a composition of homeomorphisms. The first one is set up by considering the fact that linear system is almost reducible to diagonal system with a small enough perturbation where the diagonal entries belong to spectrum of the nonuniform exponential dichotomy; and the second one is constructed in terms of the crossing times with respect to unit sphere of an adequate Lyapunov function associated to the linear system.
\end{abstract}

\section{Introduction}

The well known Hartman--Grobman's Theorem is an essential tool for the study of the local behavior of autonomous and nonautonomous nonlinear dynamical systems. This theorem establishes the existence of a local topological conjugacy  between the solutions of a nonlinear system with its linearization around an hy\-per\-bo\-lic e\-qui\-li\-brium, \textit{i.e.}, the dynamics are topologically the same in a neighborhood of the equilibrium point. 
The global behavior study begins in 1969, when C. Pugh \cite{Pugh} studied a particular case of the Hartman--Grobman's Theorem focused to linear systems with bounded and Lipschitz perturbations allowing the construction of an explicit and global homeomorphism.

\subsection{Nonautonomous Linearization and Unbounded Systems}

The work cited above inspired K.J. Palmer \cite{Palmer} to achieve the first result of global linearization in the nonautonomous framework. The seminal  article of K.J. Palmer and its extensions \cite{J,Shi} consider vector fields whose linear component inherits, in some sense, the hyperbolicity property of the autonomous case; while the nonlinear part satisfies boundedness and Lipschitzness properties. 

A remarkable extension of the previous work was made F. Lin \cite{Lin2}, who considered this problem by dropping the boundedness of the nonlinear perturbations, opening new ideas and methods. The work of Lin is mainly based in three steps:

\noindent (i) The linear system 
\begin{equation}
\label{lin1intro}
\dot{x} = A(t)x
\end{equation}
is supposed to be uniformly asymptotically stable, then it can be reduced to the linear system
\begin{equation}
\label{lin2intro}
\dot{x} = [C(t) + B(t)]x
\end{equation}
where $C(t)$ is diagonal, $B(t)$ is bounded, and the diagonal part is contained in the spectrum associated to nonautonomous hyperbolicity, a formal definition will be given later.

\noindent (ii)  The system
\begin{equation}
\label{nolin2intro}
\dot{x} = [C(t) + B(t)]x + g(t,x)
\end{equation}
is topologically equivalent to an autonomous linear system which is uniformly asymptotically stable,
where $g(t,x)$ has an equilibrium point at the origin for any $t \in \mathbb{R}$ and is Lipschitz with constant dependent of the boundedness of $B(t)$. The construction
of this topological equivalence is made by using the concept of \emph{crossing times} with respect to unit sphere. Notice that a suitable Lyapunov function is used to find this crossing times.

\noindent (iii) A chain of homeomorphisms which are involved with the topological equivalences constructed in steps (i) and (ii). 

\subsection{Structure and novelty of the article}
The section 2 states and comments the properties of the linear system (\ref{lin1intro}) and the nonlinear perturbations which will be con\-si\-dered in this work. Additionally, 
we recall the main tools that will be used along this article, namely, the property of topological equivalence, the nonuniform exponential dichotomy and its associated spectrum, the $\delta$--nonuniform kinematical similarity and
the nonuniform almost reducibility.

The section 3 is devoted to characterizing the property of nonuniform contraction in terms of an adequate Lyapunov function and positive quadratic forms. We give appropriate definitions of these concepts in a nonuniform context. Moreover, we state the main Theorems of this work; \textbf{(Ia)}: we will show that if the linear system (\ref{lin2intro}) has a nonuniform contraction then is topologically equivalent to a system (\ref{nolin2intro}) whose nonlinearity satisfy suitable properties. \textbf{(Ib)}: Moreover, we will prove that the Lyapunov function associated to  (\ref{lin2intro})  has a relation with the behavior of the solutions of the perturbed system (\ref{nolin2intro}).  \textbf{(II)}: We will prove that if the linear system  (\ref{lin1intro}) satisfies subtle conditions then is topologically equivalent to 
the perturbed system 
$$
\dot{x}=A(t)x+f(t,x)
$$
where $t\mapsto f(t,0)=0$ and $x\mapsto f(t,x)$ is Lipschitz for any $t\in \mathbb{R}_{0}^{+}$. \textbf{(III)}: We will generalize the above result considering the boundedness of $t\mapsto f(t,0)$ instead of
the vanishing at the origin. 

In the section 4 we generalize to the nonuniform context a classical result of local continuity with respect to
the initial conditions.

The last sections are devoted to the proofs of our results. In the section 5 we will follow the lines of the proof of Palmer's Lemma in \cite{PalmerEqTop} in order to prove (Ib). Note that the result of Palmer is immersed in a uniform context, while ours is in a nonuniform framework which entail technical difficulties. To obtain (Ia) we will use the  \emph{crossing times} defined by the Lyapunov functions associated to contractive linear system. Moreover, at this point, we establish the  major difference with the Lin's work: Now, it is impossible carry out the step (ii) described in Section 1.1, \textit{i.e.}, we can not construct a topological equivalence between a nonlinear system with nonuniform contractive linear part and an autonomous linear system which is uniformly stable  by intrinsic nature. In the section 6 is done the proof of (II) which is immediately consequence of the (Ib). Finally, the section 7 is devoted to proof of (III).

We emphasize that only few results of topological equivalence consider the unbounded nonlinearity. To the best of our knowledge, this property of unboundedness is only considered in \cite{Lin2}, \cite{CR2018} (in a differential and discrete uniform context, respectively) and \cite{Xia} in an impulsive framework.

\section{Preliminaries}
\subsection{Properties}
In this article we consider the following couple of the systems

\begin{subequations}
  \begin{empheq}[left=\empheqlbrace]{align}
    & \dot{x} = A(t)x \label{lin1} \\
    & \dot{x} = A(t)x + f(t,x) \label{nolin1}
  \end{empheq}
\end{subequations}
where $A \colon \mathbb{R}_0^+ \to M(n, \mathbb{R})$  and $f: \mathbb{R}_{0}^{+} \times \mathbb{R}^n \to \mathbb{R}^n$ is continuous on $(t,x)$ and

\begin{subequations}
  \begin{empheq}[left=\empheqlbrace]{align}
    & \dot{y} =[C(t)+B(t)]y \label{lin2} \\
    & \dot{y} = [C(t)+B(t)]y + g(t,y) \label{nolin2}
  \end{empheq}
\end{subequations}
where $B, C \colon \mathbb{R}_0^+ \to M(n, \mathbb{R})$  and $g: \mathbb{R}_{0}^{+} \times \mathbb{R}^n \to \mathbb{R}^n$ is continuous on $(t,x).$ Additionally, the following properties are verified:

\begin{itemize}
    \item [\textbf{(P1)}] For $\nu, \mathcal{M} > 0,$  $\left \|A(t)\right \|\leq \mathcal{M} \exp(\nu t) $  for any $t \in \mathbb{R}_{0}^{+}.$
    \vspace{0.2 cm}
    \item [\textbf{(P2)}] The evolution operator $\Phi (t,s)$ of  (\ref{lin1}) has a nonuniformly bounded growth (\cite{Zhang}), namely, there exist constants $K_0\geq1$, $a\geq0$ and $\bar{\varepsilon}\geq0$ such that $$\left \| \Phi (t,s) \right \| \leq K_0\exp(a\left | t-s \right | + \bar{\varepsilon}s), \quad t,\ s \in\R_0^{+}.$$
    \item [\textbf{(P3)}] The system (\ref{lin1}) is  nonuniform contractible if there exist $K>0$, $\alpha>0$ and $\mu\geq0$ such that
\begin{equation}
\label{estabilidadexponencialnouniforme}
     \left \| \Phi(t,s)\right \|\leq K\exp(-\alpha (t-s)+\mu s)\quad \text{for any $t\geq s\geq 0$.}
\end{equation}
    \item [\textbf{(P4)}] The function $f$ is continuous on $(t,x)$ and is an element of one of the following families of functions:
    \begin{equation*}
    \mathcal{A}_1=\left \{
    \begin{array}{c}
     f: \sup_{t\in\R_0^{+}}\left \| f(t,0)\right \|<+\infty \ \textnormal{and} \ \exists \ L_f, \ \beta\geq0 \ \textnormal{s.t.} \\
     \left \| f(t,u)-f(t,v)\right \|\leq L_f\exp(-2\beta t)\left \|u-v\right \|\ \forall t\in\R_0^{+}
    \end{array}
    \right \},
    \end{equation*}
    \medskip
    \begin{equation*}
        \mathcal{A}_2=\left \{ f: f\in\mathcal{A}_1 \ \textnormal{and} \ f(t,0)=0 \ \textnormal{for all} \ t\in\R_0^{+}\right \}.
    \end{equation*}
\end{itemize}

\subsection{Main Tools}

The fundamental tools in our work are the concepts of topological equivalence, introduced by K.J. Palmer in \cite{Palmer}, the nonuniform exponential dichotomy which was introduced  by L. Barreira and C. Valls in \cite{BV-CMP}, and the $\delta$-nonuniform kinematical similarity.

\begin{definition}
\label{eqtopnouniforme}
The systems \textnormal{(\ref{lin1})} and \textnormal{(\ref{nolin1})} will be called topologically equivalent if there exists a map $H:\R_0^{+}\times\R^n \to \mathbb{R}^{n}$ with the properties
\begin{itemize}
    \item [\textnormal{(i)}] For each fixed $t\in\R_0^{+}$, the map $\xi\mapsto H(t,\xi)$ is a bijection.
    \item [\textnormal{(ii)}] For any fixed $t\in\R_0^{+}$, the maps $\xi\mapsto H(t,\xi)$ and $\xi\mapsto H^{-1}(t,\xi)=G(t,\xi)$ are continuous.
    \item [\textnormal{(iii)}] If $\left \|\xi\right \|\rightarrow+\infty$, then $\left \|H(t,\xi)\right \|\rightarrow+\infty$.
    \item [\textnormal{(iv)}] If $x(t)$ is a solution of \textnormal{(\ref{lin1})}, then $H(t,x(t))$ is a solution of \textnormal{(\ref{lin2})}. Similarly, if $y(t)$ is a solution of \textnormal{(\ref{nolin1})}, then $G(t,y(t))$ is a solution of \textnormal{(\ref{lin1})}.
\end{itemize}
\end{definition}

\begin{definition}\textnormal{(\cite{BV-CMP}, \cite{Chu}, \cite{Zhang})}
\label{NUED}
The system \textnormal{(\ref{lin1})} has a nonuniform exponential dichotomy on $\R_0^+$ if there exist an invariant projector $P(\cdot)$, constants $K\geq 1$, $\alpha>0$ and $\mu\geq 0$ such that
\begin{equation}
\left\{\begin{array}{rcl}
\left \| \Phi(t,s)P(s) \right \|&\leq& K\exp(-\alpha(t-s)+\mu\left | s \right |),\quad t\geq s, \quad t, s \in \R_0^+,                      \\
\left \| \Phi(t,s)(I-P(s)) \right \|&\leq& K\exp(\alpha(t-s)+\mu\left | s \right |),\quad t\leq s, \quad t, s \in \R_0^+.
\end{array}\right.
\end{equation}
\end{definition}

\begin{definition}\textnormal{(\cite{Zhang})}
Given a $\delta>0$, the linear system \textnormal{(\ref{lin1})} is $\delta$-nonuniformly kinematically similar to 
\begin{equation}
\label{similaridadcinematicanouniforme}
\dot{y}=U(t)y,
\end{equation}
if there exist a Lyapunov's transformation $S(\delta,t)$ and $\upsilon\geq0$, with $$\left \|S(\delta, t)\right \|\leq M_{\upsilon, \delta}\exp(\upsilon t) \quad and \quad \left \|S^{-1}(\delta, t)\right \|\leq M_{\upsilon, \delta}\exp(\upsilon t),$$ 
such that the change of coordinates $y(t)=S^{-1}(\delta,t)x(t)$ transforms the system (\ref{lin1}) into (\ref{similaridadcinematicanouniforme}).
\end{definition}

\begin{remark}
\label{obs1}
The nonuniform kinematical similarity preserves the nonuniform contraction (see more details in \cite{Cast-Huerta}, Lemma 2). Thus, as the systems (\textnormal{\ref{lin1})} and \textnormal{(\ref{lin2})} are $\delta$--nonuniformly kinematically similar (see Theorem 1 of \cite{Cast-Huerta}), and as the system \textnormal{(\ref{lin1})} satisfies the condition \textnormal{(\textbf{P2})} with $K\geq 1$, $\alpha>0$, $\mu\geq 0$ and if $\alpha>\mu$, then the system \textnormal{(\ref{lin2})} admits a nonuniform contraction, i.e., there exist $K_1\geq 1$, $\alpha_1>0$ and $\mu_1\geq 0$ 
satisfying 

\begin{equation}
\left \| \Psi(t,s)\right \|\leq K_1 \exp(-\alpha_1 (t-s)+\mu_1 s), \ \ t\geq s \geq 0,
\end{equation}
 where $\Psi(t,s)$ is the evolution operator of \textnormal{(\ref{lin2})}.
\end{remark}

\begin{remark}
It is easy to verify that the property of $\delta$-nonuniform kinematical similarity is a equivalence relation and a particular case of nonuniform topological equi\-valence. Indeed, the properties of Definition \ref{eqtopnouniforme} are verified with $H(t,\xi)=S^{-1}(\delta,t)\xi.$
\end{remark}

Now we recall
definition of \textit{nonuniform almost reducibility} which is a generalization of the concept of almost reducibility introduced by B.F. Bylov \cite{Bylov} in a uniform context.

\begin{definition}\textnormal{(\cite{Cast-Huerta})}
The system \textnormal{(\ref{lin1})} is nonuniformly almost reducible to
$$\dot{y}=C(t)y,$$
if for any $\delta>0$ and $\varepsilon \geq0$, there exists a constant $K_{\delta,\varepsilon}\geq1$ such that \textnormal{(\ref{lin1})} is $\delta-$nonuniformly kinematically similar to
$$\dot{y}=[C(t)+B(t)]y,\quad with \quad \left \|B(t)\right \|\leq \delta K_{\delta,\varepsilon}$$
for any $t\in\R_0^{+}$.
\end{definition}

The $ \varepsilon $ parameter represents the nonuniform context in which we are working. In the case when $C(t)$ is a diagonal matrix, if $K_{\delta,\varepsilon}=1$ it is said that \textnormal{(\ref{lin1})} is almost reducible to a diagonal system and it was proved in \cite{Bylov} that any continuous linear system satisfies this property and the components of $C(t)$ are real numbers. This notion of almost reducibility to a diagonal system was rediscovered and
improved by F. Lin in \cite{Lin}, who introduces the concept of contractibility. Lin showed that the Sacker and Sell spectrum is the minimal compact set where  $C(t)$ is contained. This minimal set which $C(t)$ is contained is called \emph{contractible set}. In a nonuniform context, in \cite{Cast-Huerta} we proved that $C(t)$ is contained in the spectrum associated to nonuniform exponential dichotomy.

\begin{definition}\textnormal{(\cite{Chu} ,\cite{Zhang})}
The nonuniform spectrum (also called nonuniform exponential dichotomy spectrum) of (\ref{lin1}) is the set $\Sigma(A)$ of $\lambda\in\R$ such that the systems
 \begin{equation}
 \label{sistemaperturbado}
\dot{x}=[A(t)-\lambda I]x
 \end{equation}
 have not nonuniform exponential dichotomy on $\R_0^{+}.$
\end{definition}

\begin{remark}
Assumptions  \textnormal{{\bf{(P2)}}} and \textnormal{{\bf{(P3)}}} have a strong relation with the set $\Sigma(A).$ Indeed,
{\bf{(P2)}} implies that $\Sigma(A)$ is a finite union of at most $m\leq n$ compact intervals 
$$\Sigma(A)=\bigcup_{i=1}^{m}[ a_i,b_i ],$$
with $-\infty<a_1\leq b_1<\ldots<a_m\leq b_m <+\infty$ (see \cite{Chu}, \cite{Xia2019},\cite{Zhang}).
On the other hand, {\bf{(P3)}} implies that $\Sigma(A)\subset (-\infty,0)$.
\end{remark}

\begin{remark}
In \cite[Theorem 1]{Cast-Huerta} it was proved that if {\bf{(P1)}} and {\bf{(P2)}} are satisfied, the system \textnormal{(\ref{lin1})} is $\delta$-nonuniformly kinematically similar via $S^{-1}(\delta,t)$ to \textnormal{(\ref{lin2})}, then there exist constants $M_1>0$ and $\beta\geq0$ such that 
$$\left \| S(\delta,t)\right \|\leq M_1\exp(\beta t)\quad and\quad \left \| S^{-1}(\delta,t)\right \|\leq M_1\exp(\beta t).$$ 

Moreover, $C(t)=\textnormal{Diag}(C_1(t),\dots ,C_n(t))$ with $C_i(t)\in\Sigma(A)$ and $\left \| B(t)\right \|\leq \delta K_{\delta,\varepsilon}$.

In addition, under the same transformation, the system \textnormal{(\ref{nolin1})} is transformed in 
\begin{equation}
    \label{sistematransformado}
    \dot{y}=(C(t)+B(t))y+S^{-1}(\delta,t)f(t,S(\delta,t)y).
\end{equation}
\end{remark}

\section{Main Results}

\subsection{Nonuniform Contractions and Lyapunov Functions}
In this section, for the system (\ref{lin1}), we obtain a complete characterization of nonuniform contraction  in terms of a Lyapunov function which will allow us to construct a topological equivalence between systems \textnormal{(\ref{lin1})}--\textnormal{(\ref{nolin1})} and \textnormal{(\ref{lin2})}--\textnormal{(\ref{nolin2})}. For this purpose, we recall the definition of \emph{strict Lypaunov function} and the main results from \cite{Liao}.  

\begin{definition}
Given $\mathcal{K}\geq 1$ and $\upsilon \geq 0.$ We say that a continuous function \newline$V:[0,+\infty)\times X\rightarrow\R_0^{+}$, where $X$ is a Banach space, is a strict Lyapunov function for \textnormal{(\ref{lin1})} if

\begin{itemize}
\item[\textnormal{\textbf{(V1)}}] $\left \|x\right \|^2\leq V(t,x)\leq \mathcal{K}^2\exp(2\upsilon t) \left \|x\right \|^2$, for any $t\geq 0$ and $x\in X$,
\\
\item[\textnormal{\textbf{(V2)}}] $V(t,\Phi(t,s)x)\leq V(s,x)$, for any $t\geq s\geq 0$ and $x\in X$,
\\
\item[\textnormal{\textbf{(V3)}}] Exists $\gamma>0$ such that $V(t,\Phi(t,s)x)\leq\exp(-2\gamma(t-s))V(s,x)$, $\forall t\geq s\geq 0$ and $x\in X$.
\end{itemize}
\end{definition}

It is fair to say that unlike the classical definitions of the concept of Lyapunov functions, in the previous definition it is not requested that $V$ be differentiable.

Additionally, the last definition has subtle differences with respect to Liao  {\it{et al.}} \cite{Liao}. In fact, we have tailored it in order to relate it with the nonuniform exponential dichotomy and it is for that reason that the proof of the following result is inspired by the ideas presented by Liao \textit{et al.} in their work. Indeed, we have the following result.

\begin{theorem}
The system \textnormal{(\ref{lin1})} has nonuniform contraction if and only if it admits a strict Lyapunov function.
\end{theorem}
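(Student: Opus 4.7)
\medskip

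\noindent\textbf{Proof plan.}
The equivalence splits naturally into a straightforward direction and a construction direction. For the ``if'' part, assume $V$ satisfies (V1)--(V3). I would chain the three inequalities directly: for $t\geq s\geq 0$ and any $x$,
\begin{equation*}
\|\Phi(t,s)x\|^2 \;\leq\; V(t,\Phi(t,s)x) \;\leq\; e^{-2\gamma(t-s)}V(s,x) \;\leq\; \mathcal{K}^2 e^{2\upsilon s}e^{-2\gamma(t-s)}\|x\|^2,
\end{equation*}
which immediately yields (P3) with constants $K=\mathcal{K}$, $\alpha=\gamma$, $\mu=\upsilon$.

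For the ``only if'' part, suppose the system admits a nonuniform contraction with constants $K,\alpha,\mu$. Fix any $\gamma\in(0,\alpha)$ and define the candidate Lyapunov function
\begin{equation*}
V(t,x)\;=\;\sup_{\tau\geq t}\;e^{2\gamma(\tau-t)}\|\Phi(\tau,t)x\|^2, \qquad (t,x)\in\R_0^+\times\R^n.
\end{equation*}
Taking $\tau=t$ gives the lower bound in (V1); the upper bound follows from the nonuniform contraction estimate and the fact that $\sup_{\tau\geq t}e^{-2(\alpha-\gamma)(\tau-t)}=1$, giving $V(t,x)\leq K^{2}e^{2\mu t}\|x\|^2$. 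Property (V3) is the core identity: using $\Phi(\tau,t)\Phi(t,s)=\Phi(\tau,s)$ and writing $\tau-t=(\tau-s)-(t-s)$, I would factor out $e^{-2\gamma(t-s)}$ from the supremum, then enlarge the range of $\tau$ from $[t,\infty)$ to $[s,\infty)$ to recover $V(s,x)$. Property (V2) is the special case $\gamma=0$, or simply dropping the exponential factor in (V3).

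The main technical obstacle is to show that $V$ is actually continuous on $\R_0^+\times\R^n$, since it is defined as a supremum over an unbounded interval. The key observation is that the contraction estimate gives the decay bound
\begin{equation*}
e^{2\gamma(\tau-t)}\|\Phi(\tau,t)x\|^2\;\leq\; K^2 e^{2\mu t}e^{-2(\alpha-\gamma)(\tau-t)}\|x\|^2,
\end{equation*}
which tends to $0$ as $\tau\to+\infty$ uniformly on compact subsets of $(t,x)$. Consequently, on any compact set one may replace the supremum over $[t,\infty)$ by a supremum (in fact a maximum) over a compact interval $[t,T]$. Since $(\tau,t,x)\mapsto e^{2\gamma(\tau-t)}\|\Phi(\tau,t)x\|^2$ is jointly continuous (by continuity of the evolution operator and of the norm), a standard argument about the max of a continuous function over a compactly varying domain yields joint continuity of $V$.

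With continuity established and (V1)--(V3) verified, $V$ is a strict Lyapunov function in the sense defined above, closing the equivalence. Apart from the continuity issue, every step reduces to a direct computation using (P3) and the cocycle property of $\Phi$, so I expect no further subtleties.
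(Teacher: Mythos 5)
Your argument follows the same overall strategy as the paper: the easy direction chains \textbf{(V1)} and \textbf{(V3)} exactly as written, and the converse builds $V$ as a supremum of exponentially weighted norms $\|\Phi(\tau,t)x\|^2$ and uses the cocycle identity $\Phi(\tau,t)\Phi(t,s)=\Phi(\tau,s)$ together with enlarging the $\tau$-range from $[t,\infty)$ to $[s,\infty)$. There are two differences, both in your favour. The paper uses the weight $e^{2\alpha(\tau-t)}$ (your $\gamma$ taken equal to $\alpha$), so the supremand is bounded by $K^2e^{2\mu t}\|x\|^2$ but need not decay in $\tau$; you instead fix $\gamma\in(0,\alpha)$, which forces the supremand to $0$ as $\tau\to\infty$ uniformly on compacta. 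That choice costs nothing in \textbf{(V1)}--\textbf{(V3)} but is precisely what makes your continuity argument work, and this matters: the paper's definition of a strict Lyapunov function explicitly asks that $V$ be continuous, yet the paper's proof never verifies it. Your observation that on compacta the sup reduces to a max over a bounded $\tau$-interval, combined with joint continuity of $(\tau,t,x)\mapsto e^{2\gamma(\tau-t)}\|\Phi(\tau,t)x\|^2$, closes that gap cleanly. So your proof is correct and, on this point, more complete than the one in the paper.
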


\begin{proof}

Suppose that there exists a strict Lyapunov function for (\ref{lin1}). From the conditions \textbf{(V1)} and \textbf{(V3)} we have 
$$
\begin{array}{rl}
\left \|\Phi(t,s)x\right \|^2 \leq & V(t,\Phi(t,s)x)\leq \  \exp(-2\gamma(t-s))V(s,x),  \\
\leq & \exp(-2\gamma(t-s))\mathcal{K}^2\exp(2\upsilon s)\left \|x\right \|^2 
\end{array}
$$ 
which implies that 
$$\left \|\Phi(t,s)x\right \|\leq \mathcal{K}\exp(-\gamma(t-s)+\upsilon s)\left \|x\right \|.$$
Therefore, (\ref{lin1}) admits a nonuniform contraction with $\gamma = \alpha$ and $\upsilon = \mu.$

On the other hand, for $t\geq0$ and $x\in X$ we define
$$V(t,x)=\sup _{\tau\geq t}\left \{\left \|\Phi(\tau,s)x\right \|^2\exp(2\alpha(\tau - t))\right \}.$$
As (\ref{lin1}) admits nonuniform contraction, we have that $V(t,x)\leq K^2\exp(2\mu t)\left \|x\right \|^2$. If we consider $\tau=t$, then $\left \|x\right \|^2\leq V(t,x)$. Now, for $t\geq s\geq0$
$$
\begin{array}{rcl}
     V(t,\Phi(t,s)x) & = & \sup_{\tau\geq t}\left \{ \left \|\Phi(\tau,t)\Phi(t,s)x\right \|^{2}\exp(2\alpha (\tau - t))\right \}, \\\\
     & = & \exp(2\alpha (s-t)) \sup_{\tau\geq t}\left \{ \left \|\Phi(\tau,s)x\right \|^{2}\exp(2\alpha (\tau - s))\right \}, \\\\
     & \leq & \exp(2\alpha (s-t)) \sup_{\tau\geq s}\left \{ \left \|\Phi(\tau,s)x\right \|^{2}\exp(2\alpha (\tau - s))\right \}, \\\\
     & = & \exp(-2\alpha (t-s))V(s,x).
\end{array}
$$
Therefore, $V$ is a strict Lyapunov function for (\ref{lin1}).

\end{proof}

Now we will focus in  Lyapunov functions that are defined in terms of quadratic forms.
Let $\mathcal{S}(t)\in\mathcal{B}(X)$ be a symmetric positive-definite operator for $t\geq 0,$ where $\mathcal{B}(X)$  the space of bounded linear
operators in a Banach space $X$; in our case $X=\R^n$. Given the operator $\mathcal{S}$ as before, a $\mathit{quadratic\  Lyapunov\  function }$ V is given as follows
\begin{equation}
\label{formacuadratica}
V(t,x)=\left <\mathcal{S}(t)x,x \right >,
\end{equation} 
where $\left <\cdot,\cdot\right >$ is the usual inner product of $\R^n$.

\begin{remark}
Given two linear operators $M,N$, we write $M \leq N$ if they verify $\left < Mx,x\right >\leq\left < Nx,x\right >$ for $x\in X$.
\end{remark}

The following result (see \cite[Theorem 2.2]{Liao} with $\mu(t) = e^t$ ) establishes a characte\-rization of nonuniform contraction in terms of the existence of quadratic Lypaunov function. 

\begin{proposition}
\label{chinoshindawi}
We assume that there exist constants $c>0$ and $d\geq 1$ such that 
\begin{equation}
\label{cotaoperadordeevolucion}
    \left \| \Phi(t,s)\right \|\leq c, \ \ whenever \ \ t-s\leq \ln(d) 
\end{equation}
Then \textnormal{ (\ref{lin1})} admits a nouniform contraction if and only if there exist symmetric positive definite operators $\mathcal{S}(t)$ and constant $\mathcal{C}$,$\mathcal{K}_1>0$ such that $S(t)$ is of class $C^{1}$ in $t\geq0$ and 
\begin{equation}
\label{cotaparaS}
    \left \| \mathcal{S}(t)\right \|\leq \mathcal{C} \mathcal{K}_1 \exp(2\mu t),
\end{equation}
\begin{equation}
\label{cotanegativa}
    \mathcal{S}'(t)+A^{*}(t)\mathcal{S}(t)+\mathcal{S}(t)A(t)\leq (-Id+\mathcal{K}_1\mathcal{S}(t)).
\end{equation}
\end{proposition}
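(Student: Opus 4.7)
The plan is to build, in the necessity direction, the explicit quadratic Lyapunov operator coming from the formal Lyapunov equation with source $-I$, and in the sufficiency direction to run a Gronwall-type argument on $V(t,x)=\langle\mathcal{S}(t)x,x\rangle$, carefully tracking the nonuniform weight $e^{2\mu t}$.

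For the necessity direction I would set
$$\mathcal{S}(t):=\int_{t}^{+\infty}\Phi^{*}(\tau,t)\Phi(\tau,t)\,d\tau.$$
The nonuniform contraction estimate $\|\Phi(\tau,t)\|\leq K\exp(-\alpha(\tau-t)+\mu t)$ forces convergence and yields $\langle\mathcal{S}(t)x,x\rangle\leq \tfrac{K^{2}}{2\alpha}e^{2\mu t}\|x\|^{2}$, which produces (\ref{cotaparaS}) with $\mathcal{C}\mathcal{K}_{1}=K^{2}/(2\alpha)$. Symmetry is automatic. Positive definiteness is precisely where the standing hypothesis (\ref{cotaoperadordeevolucion}) enters: for $\tau\in[t,t+\ln d]$, the identity $x=\Phi(t,\tau)\Phi(\tau,t)x$ combined with $\|\Phi(t,\tau)\|\leq c$ forces $\|\Phi(\tau,t)x\|\geq\|x\|/c$, hence $\langle\mathcal{S}(t)x,x\rangle\geq(\ln d)c^{-2}\|x\|^{2}$. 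Differentiating under the integral sign using $\partial_{t}\Phi(\tau,t)=-\Phi(\tau,t)A(t)$ produces the sharp identity
$$\mathcal{S}'(t)+A^{*}(t)\mathcal{S}(t)+\mathcal{S}(t)A(t)=-I,$$
which is stronger than (\ref{cotanegativa}); the $C^{1}$ regularity in $t$ follows from the explicit formula and continuity of $A$.

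For the sufficiency direction I would take $V(t,x)=\langle\mathcal{S}(t)x,x\rangle$, and along a solution $x(t)=\Phi(t,s)x_{0}$ use the chain rule and (\ref{cotanegativa}) to obtain
$$\frac{d}{dt}V(t,x(t))\leq -\|x(t)\|^{2}+\mathcal{K}_{1}V(t,x(t)).$$
The upper bound in (\ref{cotaparaS}) together with positive definiteness of $\mathcal{S}(t)$ supplies a reverse estimate of the form $\|x(t)\|^{2}\geq V(t,x(t))/(\mathcal{C}\mathcal{K}_{1}e^{2\mu t})$, turning the displayed inequality into a scalar Gronwall problem for $V$ whose integration, combined once more with (\ref{cotaparaS}), translates back into the required nonuniform contraction estimate for $\|\Phi(t,s)x_{0}\|$.

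The main obstacle I anticipate is this last Gronwall step: the destabilizing contribution $+\mathcal{K}_{1}V$ issued from (\ref{cotanegativa}) must be dominated by the dissipation $-\|x\|^{2}$, and the weight $e^{2\mu t}$ in (\ref{cotaparaS}) degrades the lower bound $\|x\|^{2}\geq V/(\mathcal{C}\mathcal{K}_{1}e^{2\mu t})$ as $t$ grows. Following \cite{Liao}, the way to close the estimate is to exploit (\ref{cotaoperadordeevolucion}) to upgrade the quantitative lower bound on $\mathcal{S}(t)$ by integrating trajectories over short forward windows of length $\ln d$, and then to calibrate the exponential rate and the nonuniform factor to the initial time $s$ rather than the running time $t$, so that the resulting constants $K$, $\alpha>0$ and $\mu\geq 0$ match the form required in (\ref{estabilidadexponencialnouniforme}).
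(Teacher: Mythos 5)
The paper does not actually prove Proposition~\ref{chinoshindawi}: it imports it wholesale from Liao et al.\ (\cite[Theorem 2.2]{Liao}, specialized to $\mu(t)=e^t$) and, in Remark~6, merely records which hypothesis is used in which direction. So your proposal is not competing with a proof in the paper. Evaluated on its own terms, your necessity half is the standard Lyapunov-integral construction and is essentially sound: $\mathcal{S}(t)=\int_t^\infty \Phi^*(\tau,t)\Phi(\tau,t)\,d\tau$ converges under the nonuniform contraction, the differentiation under the integral is routine, and the sharp identity $\mathcal{S}'+A^*\mathcal{S}+\mathcal{S}A=-I$ trivially implies~(\ref{cotanegativa}). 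One small inconsistency: you invoke~(\ref{cotaoperadordeevolucion}) to get positive definiteness, but the paper's Remark~6 explicitly states this hypothesis is only needed for the converse implication; indeed positive definiteness of $\mathcal{S}(t)$ already follows from invertibility of $\Phi(\tau,t)$, without~(\ref{cotaoperadordeevolucion}), so you should not be spending that hypothesis here.

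The genuine gap is in the sufficiency half. From $\frac{d}{dt}V\le -\|x\|^2+\mathcal{K}_1 V$ together with $\|x\|^2\ge V/(\mathcal{C}\mathcal{K}_1 e^{2\mu t})$ you arrive at
\begin{equation*}
\frac{d}{dt}V(t,x(t))\le\Bigl(\mathcal{K}_1-\frac{1}{\mathcal{C}\mathcal{K}_1 e^{2\mu t}}\Bigr)V(t,x(t)),
\end{equation*}
and for $\mu>0$ the coefficient is eventually positive, so this scalar inequality yields exponential \emph{growth} of $V$, not decay, and cannot produce an estimate of the form~(\ref{estabilidadexponencialnouniforme}). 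You flag the problem yourself, but the suggested fix --- upgrading the lower bound on $\mathcal{S}(t)$ over short windows and ``calibrating the rate to the initial time $s$'' --- does not actually resolve it: a uniform lower bound $\mathcal{S}(t)\ge\eta I$ only controls the conversion from $V$ back to $\|x\|$ at the endpoints, it does not change the differential inequality's decay rate, which is driven by the \emph{upper} bound $\|\mathcal{S}(t)\|\le\mathcal{C}\mathcal{K}_1 e^{2\mu t}$. As written, the proof of the converse implication does not close; you would need to follow Liao et al.'s actual argument (which organizes the estimate differently) or first obtain a version of the Riccati inequality whose negative part is not erased by the nonuniform weight.
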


\begin{remark}
In the proof of this result, the condition (\ref{cotaoperadordeevolucion}) is only used in order to prove the implication from right to left, that is, it is not necessary in order to prove the other implication.
\end{remark}
\subsection{Main Results}

Now we present the statements of the three main results that we will prove in this article: 
\begin{theorem}
\label{lemafuncionlyapunov}
Consider the couple of system \textnormal{(\ref{lin2})}--\textnormal{(\ref{nolin2})} such that $C_i(t) \in \Sigma(A)$ for $i=1, \ldots, n$ and $ \left \|{B(t)} \right \| \leq \delta K_{\delta, \epsilon}.$ If \textnormal{\textbf{(P1)}--\textbf{(P3)}} are satisfied, then
\begin{itemize}
\item[\textnormal{(1)}] If $y(t)$ is solution of \textnormal{(\ref{nolin2})}, $g\in\mathcal{A}_2$ and $\alpha_1>\mu_1$ (constant defined in Remark \ref{obs1}), then for $L_g <\alpha_1-\mu_1$, we have 
\begin{equation}
\frac{d V(t,y(t))}{dt}\leq -2[\alpha_1-\mu_1-L_g]V(t,y(t)),
\end{equation}
where $V(t,x)$  is a quadratic Lyapunov function associated to \textnormal{(\ref{lin2})}.
\item[\textnormal{(2)}] The systems \textnormal{(\ref{lin2})}--\textnormal{(\ref{nolin2})} are topologically equivalent.
\end{itemize} 
\end{theorem}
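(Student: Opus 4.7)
For part (1), I would take the quadratic Lyapunov function $V(t,y)=\langle \mathcal{S}(t)y,y\rangle$ furnished by Proposition \ref{chinoshindawi} applied to the system (\ref{lin2}) (which is nonuniformly contractive by Remark \ref{obs1}). Differentiating $V$ along a solution $y(\cdot)$ of (\ref{nolin2}) gives
\[
\frac{d}{dt}V(t,y(t)) = \langle [\mathcal{S}'(t)+(C+B)^{*}\mathcal{S}(t)+\mathcal{S}(t)(C+B)(t)]y,y\rangle + 2\langle \mathcal{S}(t)g(t,y),y\rangle.
\]
The strict Lyapunov property \textbf{(V3)} specialises, after differentiating the inequality $V(t,\Psi(t,s)x)\le e^{-2\alpha_1(t-s)}V(s,x)$ at $t=s$, into an operator inequality that bounds the first bracket by $-2\alpha_1 V(t,y)$. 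For the cross-term I would apply Cauchy--Schwarz in the $\mathcal{S}$-inner product, invoke $g(t,0)=0$ together with the Lipschitz bound $\|g(t,y)\|\le L_g e^{-2\beta t}\|y\|$ from $\mathcal{A}_2$, and compare $\|y\|^2$ with $V(t,y)$ via \textbf{(V1)} to absorb the factor $e^{2\mu_1 t}$ inherited from $\|\mathcal{S}(t)\|$. With the Lyapunov constants calibrated so that $\beta$ matches $\mu_1$, the nonlinear contribution collapses to $2(\mu_1+L_g)V(t,y)$, and summing the two pieces yields the stated inequality.

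For part (2) I would follow the \emph{crossing-time} strategy, which is the only route available since, as emphasised in the introduction, one cannot conjugate (\ref{nolin2}) to an autonomous uniformly stable system in the nonuniform setting. Because $V$ strictly decreases along solutions of (\ref{lin2}) (by definition) and along solutions of (\ref{nolin2}) (by part (1), provided $L_g<\alpha_1-\mu_1$), each nonzero orbit of either system meets the level set $\{V=1\}$ at a unique instant. Writing $\phi(\cdot,t,\xi)$ for the flow of (\ref{nolin2}) and $\tau(t,\xi)$ for the unique solution of $V(\tau,\phi(\tau,t,\xi))=1$, I would define
\[
H(t,\xi)=\Psi(t,\tau(t,\xi))\,\phi(\tau(t,\xi),t,\xi),\qquad H(t,0)=0,
\]
and symmetrically define $G(t,\cdot)$ using the crossing time of the linear flow. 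Since $\tau(t,\xi)$ is an invariant of the orbit of (\ref{nolin2}), the conjugation identity $H(t,\phi(t,s,\eta))=\Psi(t,s)H(s,\eta)$ is automatic, securing (iv) of Definition \ref{eqtopnouniforme}. Bijectivity at fixed $t$ then follows from uniqueness of the crossing point; continuity of $H$ and $G$ reduces, via the implicit function theorem applied to $V(\tau,\phi(\tau,t,\xi))=1$, to continuity of the flows in the initial data (the topic of Section 4); and the properness (iii) holds because $\|\xi\|\to\infty$ forces $V(t,\xi)\to\infty$, hence $\tau(t,\xi)-t\to\infty$, so the backward evolution $\Psi(t,\tau)$ expands the unit-sphere point $\phi(\tau,t,\xi)$ without bound.

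The principal obstacle throughout is the unbounded nonuniform factor $e^{2\mu_1 t}$. It pollutes the relation between $\|\cdot\|$ and $V$ and forces a careful balancing of $\mu_1$, $\beta$ and $L_g$ in part (1) so that the rate $\alpha_1-\mu_1-L_g$ emerges cleanly; in part (2) it makes the properness estimate sensitive to the base time $t$, since the backward expansion of $\Psi$ involves an $e^{\mu_1 s}$ term at the earlier endpoint. A related subtlety, absent in Lin's bounded-perturbation setting, is ensuring that the crossing time is globally defined for every $(t,\xi)$ with $\xi\neq 0$; this requires combining the forward contraction furnished by part (1) with a backward growth estimate obtained by reversing time and applying a Gronwall argument to $V$ while carefully tracking the time-dependent Lipschitz constant of $g$.
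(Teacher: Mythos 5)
Your proposal is correct and follows essentially the same route as the paper: for (1), differentiate the quadratic Lyapunov function $V(t,y)=\langle\mathcal{S}(t)y,y\rangle$ supplied by Proposition~\ref{chinoshindawi} along solutions of (\ref{nolin2}), bound the linear bracket by the operator inequality coming from Remark~\ref{obs1}, and control the cross term $2\langle\mathcal{S}(t)g(t,y),y\rangle$ by the Lipschitz bound from $\mathcal{A}_2$ against $V$; for (2), build the conjugacy from the unique crossing time of the Lyapunov level set, evolving to the crossing time along one flow and back along the other, then verify that the crossing time is an orbit invariant so the intertwining identity holds. The only deviations are cosmetic — you place the nonuniform factor $\mu_1$ in the nonlinear contribution rather than the linear bracket (same total), you take the level set $\{V=1\}$ instead of $\{V=\ell/2\}$, and your $H$ sends nonlinear orbits to linear ones, which is the paper's $G$ (the paper constructs both maps anyway and checks $H\circ G=G\circ H=\mathrm{id}$). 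Your remarks about the role of Proposition~\ref{propcongronwall}, the sensitivity of the properness estimate to the nonuniform term $e^{2\mu_1 t}$, and the need to calibrate $\beta$ against $\mu_1$ accurately reflect the technical content of the paper's argument.
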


\begin{theorem}
\label{previoalprincipal}
If the properties \textnormal{{\textbf{(P1)}}-{\textbf{(P4)}}} are verified with $0 < \delta<\alpha-\mu$ and $f\in\mathcal{A}_2$  such that 
\begin{equation}
L_f\leq \frac{\delta}{M_1^2}, 
\end{equation}
with $\left \| S(\delta,t)\right \|\leq M_1\exp(\beta t)$ and $\left \| S^{-1}(\delta,t)\right \|\leq M_1\exp(\beta t)$ (see Remark 4), then the systems \textnormal{(\ref{lin1})} and \textnormal{(\ref{nolin1})} are topologically equivalent. 
\end{theorem}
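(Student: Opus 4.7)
The plan is to reduce the theorem to Theorem~\ref{lemafuncionlyapunov} by precomposing and postcomposing the topological equivalence it produces with the kinematical similarity described in Remark 4. Since (P1)--(P2) hold, Theorem 1 of \cite{Cast-Huerta} provides a Lyapunov transformation $S(\delta,t)$ with $\|S(\delta,t)\|,\|S^{-1}(\delta,t)\|\le M_1 e^{\beta t}$ such that the change of variables $y=S^{-1}(\delta,t)x$ converts (\ref{lin1}) into (\ref{lin2}), where $C(t)=\mathrm{Diag}(C_1(t),\dots,C_n(t))$ with $C_i(t)\in\Sigma(A)$ and $\|B(t)\|\le \delta K_{\delta,\varepsilon}$. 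By the last part of Remark 4 the same transformation converts (\ref{nolin1}) into (\ref{nolin2}) with
\[
g(t,y)=S^{-1}(\delta,t)f(t,S(\delta,t)y).
\]

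Next I would verify that $g$ belongs to $\mathcal{A}_2$ and that the smallness condition required by Theorem~\ref{lemafuncionlyapunov} is met. Because $f\in\mathcal{A}_2$ we have $g(t,0)=S^{-1}(\delta,t)f(t,0)=0$. The Lipschitz estimate chains as
\[
\|g(t,u)-g(t,v)\|\le \|S^{-1}(\delta,t)\|\,L_f e^{-2\beta t}\,\|S(\delta,t)\|\,\|u-v\|\le M_1^2 L_f\,\|u-v\|,
\]
so $L_g=M_1^2 L_f\le \delta$ by the hypothesis $L_f\le \delta/M_1^2$. The constraint $0<\delta<\alpha-\mu$ together with Remark~1 yields a nonuniform contraction for (\ref{lin2}) with constants $\alpha_1>\mu_1$ controlled by $\alpha,\mu,\beta,\delta$, and the constants have been tuned exactly so that $L_g<\alpha_1-\mu_1$. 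Tracking this inequality rigorously through the kinematic bound $M_1 e^{\beta t}$ is the main technical obstacle, since one must separate the effect of the polynomial-in-$\delta$ growth of $K_{\delta,\varepsilon}$ from the loss $\beta$ absorbed into the contraction rate.

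Once these hypotheses are secured, Theorem~\ref{lemafuncionlyapunov}(2) produces a homeomorphism $\widetilde H(t,\cdot)\colon\R^n\to\R^n$ with inverse $\widetilde G(t,\cdot)$ realising the topological equivalence between (\ref{lin2}) and (\ref{nolin2}). I would then define
\[
H(t,\xi):=S(\delta,t)\,\widetilde H\!\left(t,S^{-1}(\delta,t)\xi\right),\qquad G(t,\eta):=S(\delta,t)\,\widetilde G\!\left(t,S^{-1}(\delta,t)\eta\right),
\]
and check properties (i)--(iv) of Definition~\ref{eqtopnouniforme}. Properties (i) and (ii) are immediate from the bijectivity and continuity of $\widetilde H$, $\widetilde G$ and the invertibility of $S(\delta,t)$. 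For (iii), the two-sided bound
\[
\|S^{-1}(\delta,t)\xi\|\ge \frac{\|\xi\|}{M_1 e^{\beta t}}
\]
forces $\|S^{-1}(\delta,t)\xi\|\to\infty$ as $\|\xi\|\to\infty$; applying the analogous bound after $\widetilde H$ transfers the coercivity to $H(t,\xi)$. For (iv), if $x(t)$ solves (\ref{lin1}) then $S^{-1}(\delta,t)x(t)$ solves (\ref{lin2}), $\widetilde H$ sends it to a solution of (\ref{nolin2}), and finally $S(\delta,t)$ brings that back to a solution of (\ref{nolin1}); the corresponding chain for $G$ is verified symmetrically, completing the proof.
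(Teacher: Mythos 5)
Your proof follows essentially the same route as the paper's: conjugate by the Lyapunov transformation $S(\delta,t)$ from Remark~4 to pass from $(\ref{lin1})$--$(\ref{nolin1})$ to $(\ref{lin2})$--$(\ref{nolin2})$ with $g(t,y)=S^{-1}(\delta,t)f(t,S(\delta,t)y)$, verify $g\in\mathcal{A}_2$ with $L_g=M_1^2L_f\leq\delta$, invoke Theorem~\ref{lemafuncionlyapunov}, and compose the resulting homeomorphism with $S(\delta,\cdot)$ and $S^{-1}(\delta,\cdot)$ using transitivity of topological equivalence. You are also right to flag the passage from $L_g\le\delta<\alpha-\mu$ to the condition $L_g<\alpha_1-\mu_1$ actually required by Theorem~\ref{lemafuncionlyapunov}: the paper states only ``Since the condition $L_g\leq\delta<\alpha-\mu$ is satisfied'' and leaves the relation between $(\alpha_1,\mu_1)$ and $(\alpha,\mu,\delta,\beta)$ implicit, so that step is no more explicit in the paper than in your sketch.
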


\begin{theorem}
\label{ultimo}
If the properties \textnormal{{\textbf{(P1)}}-{\textbf{(P4)}}} are verified with $0 < \delta<\alpha-\mu$ and $f\in\mathcal{A}_1$  such that 
\begin{equation}
\label{cotasLf}
L_f\leq \min \left \{\frac{\delta}{M_1^2}, \frac{\alpha}{K} \right \},
\end{equation}
then the systems\textnormal{(\ref{lin1})} and \textnormal{(\ref{nolin1})} are topologically equivalent.
\end{theorem}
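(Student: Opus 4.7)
The plan is to reduce Theorem \ref{ultimo} to Theorem \ref{previoalprincipal} by a translation in phase space along a particular solution of (\ref{nolin1}). Since $f \in \mathcal{A}_1 \setminus \mathcal{A}_2$, the only obstruction to applying Theorem \ref{previoalprincipal} directly is the nonvanishing term $f(t,0)$, and this can be absorbed into a change of coordinates.

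First, I would produce a solution $\psi \colon \R_0^+ \to \R^n$ of the nonlinear system (\ref{nolin1}), taking for instance $\psi(0) = 0$. Local existence is guaranteed by continuity of $A$ together with the Lipschitz property of $f$ in $x$; global continuation to $\R_0^+$ then follows from the sublinear bound $\|f(t,x)\| \leq \sup_t \|f(t,0)\| + L_f \|x\|$ applied to the Duhamel representation, combined with (P3) and a Gronwall argument. The extra threshold $L_f \leq \alpha/K$ in (\ref{cotasLf}) plays its role precisely here, producing an a priori estimate of the form $\|\psi(t)\| \leq C\exp(\mu t)$ that is consistent with the nonuniform data of (P3) and ensures $\psi$ is defined on all of $\R_0^+$.

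Next, introduce the translated perturbation $\hat{f}(t,y) := f(t,y+\psi(t)) - f(t,\psi(t))$. Then $\hat{f}(t,0) = 0$ and
$$\|\hat{f}(t,u) - \hat{f}(t,v)\| = \|f(t,u+\psi(t)) - f(t,v+\psi(t))\| \leq L_f \exp(-2\beta t)\|u-v\|,$$
so $\hat{f} \in \mathcal{A}_2$ with the \emph{same} Lipschitz data $(L_f,\beta)$ as $f$. In particular the bound $L_f \leq \delta/M_1^2$ is inherited, so Theorem \ref{previoalprincipal} applies to the auxiliary system $\dot{y} = A(t)y + \hat{f}(t,y)$ and yields a topological equivalence $H_2(t,\cdot)$ between this system and the linear system (\ref{lin1}).

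Finally, I would set $H(t,\xi) := H_2(t,\xi) + \psi(t)$ with inverse $G(t,\eta) := H_2^{-1}(t,\eta - \psi(t))$. If $x(t)$ solves (\ref{lin1}), then $y(t) := H_2(t,x(t))$ solves $\dot{y} = A(t)y + \hat{f}(t,y)$, and using $\dot{\psi} = A(t)\psi + f(t,\psi)$ a direct computation gives
$$\frac{d}{dt}\bigl(y(t)+\psi(t)\bigr) = A(t)\bigl(y(t)+\psi(t)\bigr) + f\bigl(t,\, y(t)+\psi(t)\bigr),$$
so $H(t,x(t))$ solves (\ref{nolin1}); the converse direction is symmetric. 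Conditions (i)--(iii) of Definition \ref{eqtopnouniforme} transfer from $H_2$ to $H$ because translation by the finite $t$-dependent vector $\psi(t)$ preserves bijectivity and bicontinuity, while $\|H(t,\xi)\| \geq \|H_2(t,\xi)\| - \|\psi(t)\|$ supplies the coercivity at infinity. The main obstacle I anticipate is Step~1: producing $\psi$ globally on $\R_0^+$ with an estimate sharp enough to render the translation harmless under the nonuniform growth allowed by (P1)--(P3), which is exactly where the new threshold $L_f \leq \alpha/K$ becomes essential.
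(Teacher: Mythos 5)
Your approach is correct but follows a genuinely different, and arguably more elementary, route than the paper. The paper's proof also begins by splitting $f$ into $f_0=f-f(\cdot,0)\in\mathcal{A}_2$ and invoking Theorem \ref{previoalprincipal}, but it then constructs the equivalence between $\dot x = A(t)x+f(t,x)$ and $\dot x = A(t)x+f_0(t,x)$ in Palmer--Lin style: for each $(\tau,\xi)$ it sets up the shifted perturbation $F(t,y,(\tau,\xi))$, applies the fixed-point result of Proposition \ref{propespaciobanach} to obtain the unique bounded solution $Z(t,(\tau,\xi))$, and defines $H(\tau,\xi)=\xi+Z(\tau,(\tau,\xi))$. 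The threshold $L_f\leq\alpha/K$ is used there exactly to make the integral operator in Proposition \ref{propespaciobanach} a contraction (one needs $KL_{\mathcal{F}}<\alpha$) and again in the accompanying lemmas. Your route replaces that entire contraction-mapping machinery by a single translation $y\mapsto y+\psi(t)$ along one global solution $\psi$ of (\ref{nolin1}); this preserves the Lipschitz data, sends $f$ to a member of $\mathcal{A}_2$, and composes cleanly with the $H_2$ from Theorem \ref{previoalprincipal}, so it is both shorter and conceptually transparent. One point to correct in your justification: the bound $L_f\leq\alpha/K$ is not what guarantees global existence of $\psi$. Under \textbf{(P1)} the coefficient $A(t)$ is locally bounded, and $\|f(t,x)\|\leq K_0+L_f\|x\|$ gives linear growth, so $\psi$ extends to all of $\R_0^+$ by the standard continuation argument regardless of the size of $L_f$; moreover no a priori growth estimate such as $\|\psi(t)\|\leq C\exp(\mu t)$ is needed, because Definition \ref{eqtopnouniforme} constrains $H(t,\cdot)$ only pointwise in $t$ and is insensitive to how $\psi(t)$ grows. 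Consequently your translation argument appears to establish the theorem under the single hypothesis $L_f\leq\delta/M_1^2$, suggesting that the extra constraint $L_f\leq\alpha/K$ in (\ref{cotasLf}) is an artifact of the paper's fixed-point construction rather than an intrinsic requirement.
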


\section{Some Classical Results}

The following proposition is a classical result of local continuity with respect to the initial conditions for differential equations. 

\begin{proposition}
\label{propcongronwall}
Let us consider the differential equation
\begin{equation}
    \label{cualquieraF}
    \dot{x} = F(t,x)
\end{equation}
where $F\in\mathcal{A}_2,$ then for the solution $X(t,s,u)$ of (\ref{cualquieraF}) with $X(s,s,u)=u$, we have that 
$$\left \| u-v \right \| \exp(-L_F|t-s|)\leq \left \| X(t,s,u)-X(t,s,v) \right \|\leq \left \| u-v \right \| \exp(L_F|t-s|).$$

\end{proposition}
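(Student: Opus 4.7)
The plan is to reduce the statement to a standard Gronwall argument and then obtain the lower bound by a symmetric application of the upper bound, exploiting the reversibility of the flow.

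First I would fix $s\geq 0$, $u,v\in\R^n$, and write $\phi(t)=X(t,s,u)$ and $\psi(t)=X(t,s,v)$. Using the integral form of \textnormal{(\ref{cualquieraF})},
$$
\phi(t)-\psi(t)=(u-v)+\int_s^{t}\bigl[F(\tau,\phi(\tau))-F(\tau,\psi(\tau))\bigr]\,d\tau .
$$
Since $F\in\mathcal{A}_2\subset\mathcal{A}_1$, the Lipschitz bound in \textbf{(P4)} gives
$\|F(\tau,\phi(\tau))-F(\tau,\psi(\tau))\|\leq L_F e^{-2\beta\tau}\|\phi(\tau)-\psi(\tau)\|\leq L_F\|\phi(\tau)-\psi(\tau)\|$
for all $\tau\geq 0$, since $\beta\geq 0$. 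Taking norms and, for $t\geq s$, applying the classical Gronwall inequality to the scalar function $w(t)=\|\phi(t)-\psi(t)\|$ yields the upper estimate
$$
\|X(t,s,u)-X(t,s,v)\|\leq \|u-v\|\exp\bigl(L_F(t-s)\bigr).
$$
For $t\leq s$ the same argument, applied to $\tilde{w}(r)=\|\phi(s-r)-\psi(s-r)\|$ on $[0,s-t]$, produces the analogous bound with $|t-s|$ in the exponent. This establishes the right-hand inequality.

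For the lower inequality the idea is to use the uniqueness/reversibility of the flow: $X(s,t,X(t,s,u))=u$ and likewise for $v$. Applying the upper estimate already proved, but now with the initial time being $t$ and the target time being $s$ and with initial data $X(t,s,u),\ X(t,s,v)$, I get
$$
\|u-v\|=\|X(s,t,X(t,s,u))-X(s,t,X(t,s,v))\|\leq \|X(t,s,u)-X(t,s,v)\|\exp\bigl(L_F|t-s|\bigr),
$$
which after dividing by the exponential gives exactly the left-hand inequality.

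I do not expect any real obstacle here: the only subtle point is that the Lipschitz constant in \textbf{(P4)} carries a factor $e^{-2\beta t}$, but since $\beta\geq 0$ and $t\geq 0$ this factor is bounded by $1$ and may be absorbed into $L_F$, so Gronwall applies directly without needing a time-weighted version. The hypothesis $F(t,0)=0$ guarantees global existence of $X(t,s,u)$ for $t\geq s$ (via the linear growth $\|F(t,x)\|\leq L_F\|x\|$) and thus legitimizes the manipulations above on the whole half-line.
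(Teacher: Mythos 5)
Your proof is correct. Note first that the paper itself gives no argument for this proposition; it simply cites Lin's Proposition~2 in \cite{Lin2}, so your proposal amounts to supplying the elementary verification that the paper leaves to the reference. Your argument is sound: the observation that the time-dependent Lipschitz modulus $L_F e^{-2\beta t}\le L_F$ on $\R_0^{+}$ reduces everything to the constant-coefficient Gronwall inequality is exactly what is needed, your handling of the case $t<s$ by the substitution $r\mapsto s-r$ is valid, and the derivation of the lower bound from the upper bound together with the flow identity $X(s,t,X(t,s,u))=u$ is a clean and standard trick. The remark on global existence via the linear growth $\|F(t,x)\|\le L_F\|x\|$ (using $F(t,0)=0$) is the right observation to justify that the solutions and the flow inversion exist on all of $\R_0^{+}$. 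A small stylistic alternative worth mentioning: one can obtain both inequalities at once by noting that $w(t)=\|X(t,s,u)-X(t,s,v)\|$ satisfies $|w'(t)|\le L_F w(t)$ (in the Dini sense where $w$ is not differentiable), whence $-L_F w\le w'\le L_F w$ integrates directly to the two-sided bound; this avoids the explicit flow-reversal step, but your version is equally rigorous and arguably more transparent.
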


\proof{See \cite{Lin2}, Proposition 2.}

\qed 

The following result is an extension to the nonuniform context of \cite[Proposition 5]{Lin2}.

\begin{proposition}
\label{propespaciobanach}
We assume that the system \textnormal{(\ref{lin1})} has a nonuniform exponential dichotomy on $\R_0^{+}$ with constants $K\geq 1$, $\alpha>0$, $\mu\geq0$ and $P(t)=I$ for any $t\in\R_0^{+}$. Let us consider the nonlinear perturbation

\begin{equation}
\label{perturbacionnolineal}
\dot{x}=A(t)x+\mathcal{F}(t,x(t),\kappa),
\end{equation}
where $\mathcal{F}:\R_0^{+}\times\R^{n}\times \boldsymbol{B}\rightarrow \R^{n}$ is a continuous function and $\boldsymbol{B}$ is a Banach space. Besides, $\mathcal{F}$ satisfies the following conditions:

\begin{itemize}
    \item [\textnormal{(i)}] $\mathcal{F}(t,x,\kappa)$ is bounded with respect a $t$, for all $x\in\R^{n}$ and $\kappa\in\boldsymbol{B}$ fixed with the norm 
    $$\left \|\mathcal{F}(x,\kappa)\right \|_{A}=\sup_{t\in\R_0^{+}} \exp(-\mu t)\left \|\mathcal{F}(t,x,\kappa)\right \|.$$

    \item[\textnormal{(ii)}] There exist $L_{\mathcal{F}}>0$ such that 
    $$\left \|\mathcal{F}(t,x_1,\kappa)-\mathcal{F}(t,x_2,\kappa)\right \|\leq L_{\mathcal{F}}\exp(-2\mu t)\left \|x_1-x_2\right \|\quad$$ for any $t\in\R_0^{+}$ and $\kappa\in\boldsymbol{B}$.
    
    \item[\textnormal{(iii)}]$K_0=\sup_{t\in\R_{0}^{+}, \kappa\in\boldsymbol{B}}\left \|\mathcal{F}(t,0,\kappa) \right \|<+\infty$
\end{itemize}

If $KL_{\mathcal{F}}<\alpha$ then for any fixed $\kappa\in\boldsymbol{B}$ the system \textnormal{(\ref{perturbacionnolineal})} has a unique bounded solution $Z(t,\kappa)$, with the norm $\left \| \cdot \right \|_{A}$, described by
\begin{equation}
\label{solucionacotada}
    Z(t,\kappa)=\int_{0}^{t}\Phi(t,\tau)\mathcal{F}(\tau,Z(\tau,\kappa),\kappa) d\tau.
\end{equation}
such that $\sup_{t\in\R_0^{+}, \kappa\in\boldsymbol{B}} \left \| Z(t,\kappa)\right \|<+\infty$.
\end{proposition}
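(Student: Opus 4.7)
The plan is to recast (\ref{perturbacionnolineal}) as the fixed-point problem (\ref{solucionacotada}) on a Banach space of functions carrying a weight that compensates for the nonuniform factor $e^{\mu \tau}$ in the dichotomy, and then apply the Banach contraction principle with $\kappa$ as a parameter.

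Concretely, I would work in the Banach space
$$E = \{Z : \R_0^{+} \to \R^n \text{ continuous} : \|Z\|_A = \sup_{t \geq 0} e^{-\mu t} \|Z(t)\| < +\infty\}$$
carrying the same weight that appears in hypothesis (i). For each $\kappa \in \boldsymbol{B}$, define
$$(T_\kappa Z)(t) = \int_0^t \Phi(t,\tau)\, \mathcal{F}(\tau, Z(\tau), \kappa)\, d\tau.$$
Using $P \equiv I$ to reduce the dichotomy to $\|\Phi(t,\tau)\| \leq K e^{-\alpha(t-\tau) + \mu \tau}$, together with $\|\mathcal{F}(\tau, Z(\tau), \kappa)\| \leq K_0 + L_{\mathcal{F}} e^{-2\mu \tau}\|Z(\tau)\|$ (from (ii)--(iii) and the triangle inequality), the standard integration argument yields the two estimates
$$\|T_\kappa Z\|_A \leq \frac{K K_0}{\alpha + \mu} + \frac{K L_{\mathcal{F}}}{\alpha}\,\|Z\|_A, \qquad \|T_\kappa Z_1 - T_\kappa Z_2\|_A \leq \frac{K L_{\mathcal{F}}}{\alpha}\,\|Z_1 - Z_2\|_A,$$
so $T_\kappa : E \to E$ is well defined and, thanks to $K L_{\mathcal{F}} < \alpha$, is a strict contraction.

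Banach's fixed-point theorem then provides the unique $Z(\cdot,\kappa) \in E$ satisfying (\ref{solucionacotada}); a standard differentiation of the integral shows it solves (\ref{perturbacionnolineal}). Solving the first inequality gives $\|Z(\cdot,\kappa)\|_A \leq \frac{K K_0/(\alpha+\mu)}{1 - K L_{\mathcal{F}}/\alpha}$, and uniformity of $K_0$ in $\kappa$ from (iii) yields the claimed uniform boundedness.

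The main obstacle is identifying the correct function space. The naive sup norm fails because the forcing contribution $\int_0^t \Phi(t,\tau)\mathcal{F}(\tau,0,\kappa)\, d\tau$ grows like $e^{\mu t}$ when $\mu > 0$; the weighted norm $\|\cdot\|_A$ is tailored precisely to cancel that growth, and it is also the choice that makes the $e^{-2\mu\tau}$ weight in the Lipschitz condition (ii) combine with the $e^{\mu\tau}$ factor in $\Phi$ so that the contraction estimate collapses to the clean $\int_0^t e^{-\alpha(t-\tau)}\, d\tau$. Matching these exponential bookkeepings is the only delicate point in an otherwise classical argument.
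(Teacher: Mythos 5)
Your proof is correct and essentially matches the paper's: you use the same weighted norm $\|\cdot\|_A$, the same integral operator, and the same contraction factor $KL_{\mathcal{F}}/\alpha$, with Banach's fixed-point theorem standing in for the paper's explicit Picard iteration (your free-term bound $KK_0/(\alpha+\mu)$ is even marginally sharper than the paper's $KK_0/\alpha$). The one substantive difference is that the paper runs the iteration in the larger space $\boldsymbol{C}$ of functions jointly continuous in $(t,\kappa)$, so the fixed point $Z$ inherits continuity in $\kappa$ for free---a property that gets reused in Section~7 when proving continuity of the conjugacy---whereas your pointwise-in-$\kappa$ contraction gives existence and the uniform bound but would have to supply continuity in $\kappa$ by a separate argument.
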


\proof{
The idea of this result is inspired by the Proposition exhibited in the work of F. Lin in \cite[p. 41]{Lin2}. Let us consider a fixed $\kappa\in\boldsymbol{B}$ and construct the sequence $\left \{\varphi_j\right \}_j$ recursively defined by 
$$\varphi_{j+1}(t,\kappa)=\int_{0}^{t}\Phi(t,\tau)\mathcal{F}(\tau,\varphi_j(\tau,\kappa),\kappa)d\tau$$
and 
$$\varphi_{0}(t,\kappa)=\int_{0}^{t}\Phi(t,\tau)\mathcal{F}(\tau,0,\kappa)d\tau,$$ 
where $\varphi_0(t,\kappa)\in\boldsymbol{C},$ where $\boldsymbol{C}$ is defined by
$$
 \boldsymbol{C}=\left \{ 
 \begin{array}{c} U:\R_0^{+}\times\boldsymbol{B}\rightarrow{\R^{n}} \colon \text{for any} \ \kappa\in\boldsymbol{B}\ \text{fixed,}\\
\left \| U(\kappa) \right \|_{A}<+\infty \
 \text{and}\  U \ \text{is continuous in} \,\, (t, \kappa) 
\end{array}
 \right \}
$$
with $\left \| U(\kappa) \right \|_{A}=\sup_{t\in\R_0^{+}}\exp(-\mu t) \left \| U(t,\kappa)\right \|$.

In the first place we will proof that $(\boldsymbol{C},\left \| \cdot \right \|_{A})$ is a Banach space. Indeed, let $\left\{U_n\right \}_{n\in\N}$ be a Cauchy sequence in $\boldsymbol{C}$, then for any $\varepsilon>0$ and for $\tau\in\R_0^{+}$ fixed, there exists $N\in\N$ such that for all $n, m \in\N$ 

$$\left \|U_n(\kappa) - U_m(\kappa)\right \|_{A}=\sup_{t\in\R_0^{+}}\exp(-\mu t) \left \| U_n(t,\kappa)-U_m(t,\kappa)\right \|<\varepsilon,$$
but the expression $\exp(-\mu \tau) \left \| U_n(\tau,\kappa)-U_m(\tau,\kappa)\right \|\leq\left \|U_n - U_m\right \|_{A} $ allows us deduce that 
$$\left \| U_n(\tau,\kappa)-U_m(\tau,\kappa)\right \|\leq \exp(\mu \tau)\varepsilon,$$
therefore $\left \{U_n(\tau,\kappa)\right \}_{n\in\N}$ is a Cauchy sequence in $\R^{n}$, so we obtain a well-defined function $U:\R_0^{+}\times\boldsymbol{B}\rightarrow{\R^{n}}$ which satisfies $U(\tau,\kappa)=\lim_{n\rightarrow{+\infty}}U_n(\tau,\kappa)$ for $\tau, \kappa$ fixed. Also we have
$$\left \| U(\tau,\kappa)-U_n(\tau,\kappa)\right \|=\lim_{m\rightarrow{+\infty}}\left \| U_m(\tau,\kappa)-U_n(\tau,\kappa)\right \|\leq\lim_{m\rightarrow{+\infty}}\exp(\mu\tau)\varepsilon=\exp(\mu\tau)\varepsilon,$$
then from the last inequality we have
$$\exp(-\mu\tau)\left \| U(\tau,\kappa)-U_n(\tau,\kappa)\right \|<\varepsilon,$$
so 
$$\sup_{\tau\in\R_0^{+}}\exp(-\mu\tau)\left \| U(\tau,\kappa)-U_n(\tau,\kappa)\right \|\leq\varepsilon.$$

Thus we can infer that 
$$\left \| U(\kappa) \right \|_{A}\leq\left \|U(\kappa)-U_n(\kappa)\right \|_{A}+\left \|U_n(\kappa)\right \|_{A}<\infty$$
for big $n\in\N$. Additionally $U$ is continuous due to the continuity of $U_k$, then $U\in\boldsymbol{C}$, so $(\boldsymbol{C},\left \|\cdot\right \|_{A})$ is a Banach space.  

Now we will prove by using induction that $\varphi_{j}\in\boldsymbol{C}$ for any $j\in\N\cup\left \{ 0 \right \}$. Indeed, when $\varphi_{j}\in\boldsymbol{C}$, 
we estimate $\left\|\varphi_{j+1}(\kappa)\right \|_{A}$:
\begin{equation*} 
\left \|\varphi_{j+1}(t,\kappa)\right \|
\leq\int_{0}^{t}K\exp(-\alpha(t-\tau)+\mu \tau)(L_\mathcal{F}\exp(-2\mu \tau)\left \| \varphi_j(\tau,\kappa)\right \|+K_0),
\end{equation*}
from which it follows that if $K_j=\left \|\varphi_j(\kappa)\right \|_{A}$, then
\begin{equation*}
\begin{array}{rl}
\exp(-\mu t)\left \|\varphi_{j+1}(t,\kappa)\right \|\leq & \displaystyle\int_{0}^{t}K\exp(-\mu (t-\tau)\exp(-\alpha(t-\tau))(L_\mathcal{F}K_j\exp(-\mu\tau)+K_0)d\tau, \\
\\
< &\displaystyle\int_{0}^{t}K\exp(-\alpha(t-\tau))(L_\mathcal{F}K_j+K_0)d\tau, \\
\\
< & \displaystyle\frac{K(L_\mathcal{F}K_j+K_0)}{\alpha}<+\infty
\end{array}
\end{equation*}
and we obtain
$$\left \|\varphi_{j+1}(\kappa)\right \|_{A}=\sup_{t\in\R_0^{+}}\exp(-\mu t)\left \|\varphi_{j+1}(t,\kappa)\right \|\leq\displaystyle\frac{K(K_jL_\mathcal{F}+K_0)}{\alpha}<+\infty .$$

From the above, we can consider a map $T:\boldsymbol{C}\rightarrow\boldsymbol{C}$ given by 
\begin{equation*}
    T( Z(t,\kappa))=\int_{0}^{t}\Phi(t,\tau)\mathcal{F}(\tau,Z(\tau,\kappa),\kappa) d\tau,
\end{equation*}
which is well defined. Since we have that $KL_\mathcal{F}<\alpha$, we affirm that $T
$ is a contraction and the following estimate proves it: 
$$
\begin{array}{rl}
\left \|  T( Z_1(t,\kappa))- T( Z_2(t,\kappa))\right \| \leq & \displaystyle\int_{0}^{t}KL_\mathcal{F}\exp(-\alpha (t-\tau)+\mu\tau-2\mu\tau)\left \|  Z_1(\tau,\kappa)- Z_2(\tau,\kappa) \right \|d\tau, \\
\\
\left \|T( Z_1(\kappa))- T( Z_2(\kappa)) \right \|_{A} \leq & \displaystyle\frac{KL_\mathcal{F}}{\alpha}\left \|Z_1(\kappa)- Z_2(\kappa) \right \|_{A},
\end{array}
$$
and these computations ensure that $\left \{\varphi_j \right \}$ is the unique sequences in $
\boldsymbol{C}$ satisfying the recursivity stated above.

Now we will prove that $\left \{\varphi_j \right \}$ is a Cauchy sequence in the Banach space $(\boldsymbol{C},\left \|\cdot \right \|_{A})$. We proceed inductively. We observe that, firstly 
\begin{equation*}
\begin{array}{rl}
     \left \|\varphi_1(t,\kappa)-\varphi_0(t,\kappa) \right \| \leq & \displaystyle\int_{0}^{t}K\exp(-\alpha(t-\tau)+\mu \tau) L_\mathcal{F}\exp(-2\mu\tau)\left \|\varphi_0(\tau,\kappa) \right \|d\tau,  \\
     \\
     \leq & \displaystyle KL_\mathcal{F}\frac{KK_0}{\alpha}\int_{0}^{t}\exp(-\alpha(t-\tau))d\tau\leq \bar{K}\frac{KL_\mathcal{F}}{\alpha}, \\
     \\
     \end{array}
     \end{equation*}
    from which we can verify that
    $$ \left \|\varphi_1(\kappa)-\varphi_0(\kappa)\right \|_{A} \leq  \displaystyle\bar{K}\frac{KL_\mathcal{F}}{\alpha} $$
with $\displaystyle\bar{K}=\frac{KK_0}{\alpha}$. As inductive hypothesis, we have that 
$$\displaystyle\left \|\varphi_j(\kappa)-\varphi_{j-1}(\kappa)\right \|_{A} \leq \displaystyle\bar{K}\left (\frac{KL_\mathcal{F}}{\alpha}\right )^j$$  
and therefore
\begin{equation*}
    \begin{array}{rl}
     \left \|\varphi_{j+1}(t,\kappa)-\varphi_j(t,\kappa) \right \| \leq & \displaystyle\int_{0}^{t}K\exp(-\alpha(t-\tau)+\mu \tau) L_\mathcal{F}\exp(-2\mu\tau)\left \|\varphi_j(\tau,\kappa)-\varphi_{j-1}(\tau,\kappa) \right \|d\tau,  \\
     \\
     \leq & \displaystyle \bar{K}KL_F\left (\frac{KL_\mathcal{F}}{\alpha}\right )^j\int_{0}^{t}\exp(-\alpha(t-\tau))d\tau\leq \bar{K}\left (\frac{KL_\mathcal{F}}{\alpha}\right )^{j+1}, \\
     \\
     \left \|\varphi_{j+1}(\kappa)-\varphi_j(\kappa)\right \|_{A} \leq & \displaystyle\bar{K}\left (\frac{KL_\mathcal{F}}{\alpha}\right )^{j+1}. 
\end{array}
\end{equation*}

Moreover for all $\varepsilon>0$ there exists $N(\varepsilon)\in\mathbb{N}$ such that for any $n,m\geq N$ we have that 
\begin{equation*}
\begin{array}{rl}
   \displaystyle \left \|\varphi_n(\kappa)-\varphi_m(\kappa)\right \|_{A}\leq & \bar{K}\left (\frac{KL_\mathcal{F}}{\alpha}\right )^{m+1}\left (1+\frac{KL_\mathcal{F}}{\alpha}+\cdots + \left (\frac{KL_\mathcal{F}}{\alpha}\right )^{n-(m-1)}\right )\\
    \\
     & \displaystyle\leq \bar{K}\left (\frac{KL_\mathcal{F}}{\alpha}\right )^{N} \left (\frac{1-\left (\frac{KL_\mathcal{F}}{\alpha}\right)^{n-m}}1-\frac{KL_\mathcal{F}}{\alpha}\right ),\\
    \\
    & \leq \displaystyle \bar{K}\left (\frac{KL_F}{\alpha}\right )^{N}\left (\frac{1}{1-\frac{KL_F}{\alpha}}\right )<\varepsilon.
\end{array}
\end{equation*}
This inequality proves that $\left \{\varphi_j \right \}$ is a Cauchy sequence in the Banach space $\boldsymbol{C}$ which converges to the fixed point $Z(t,\kappa)$ defined by the equation (\ref{solucionacotada}).

Considering a fixed $\kappa\in\boldsymbol{B}$ we have that $\left \| Z(\kappa)\right \|_{A}<C(\kappa)$. That is, $Z(\cdot,\kappa)\in\boldsymbol{C}$ but its bound $C(\kappa)$ could be determined by $\kappa$. However, we will prove that $C(\kappa)$ has an upper bounded independent of $\kappa$. Indeed, combining the properties (ii), (iii) with the nonuniform exponential dichotomy of the system (\ref{lin1}), we have that
$$
\begin{array}{rc}
\left \|Z(t,\kappa)\right \|\leq\ KL_\mathcal{F} \displaystyle\int_{0}^{t}\exp(-\alpha(t-\tau)+\mu\tau)\exp(-2\mu\tau)\left \|Z(\tau,\kappa)\right \|d\tau & \\
+ KK_0\displaystyle\int_{0}^{t}\exp(-\alpha(t-\tau)+\mu\tau)d\tau,
\end{array}
$$
which implies that
$$
\begin{array}{rc}
\exp(-\mu t)\left \|Z(t,\kappa)\right \|\leq KL_\mathcal{F}\displaystyle\int_{0}^{t}\exp(-\alpha(t-\tau))\exp(-\mu\tau)\left \|Z(\kappa)\right \|_{A}d\tau & \\
+ KK_0\displaystyle\int_{0}^{t}\exp(-\alpha(t-\tau))d\tau.
\end{array} 
$$

Thus, 
$$\exp(-\mu t)\left \|Z(t,\kappa)\right \| \leq\frac{KL_\mathcal{F}C(\kappa)}{\alpha} +\frac{KK_0}{\alpha},$$
and taking supremum over $t\in\R_0^{+}$, we obtain
$$
C(\kappa) \leq \frac{K K_0}{\alpha}\left (1 - \frac{K L_{\mathcal{F}}}{\alpha}\right )^{-1}.
$$
}
\qed

\section{Proof of Theorem \ref{lemafuncionlyapunov}}

We will follow the lines of proof of the Lemma in the article of K.J. Palmer \cite[p. 11]{PalmerEqTop}  in order to obtain properties and conditions to prove the Theorem \ref{lemafuncionlyapunov} and \ref{previoalprincipal}. We point out that in the calculations of the derivative of the quadratic Lyapunov function $V$ with respect $t$ evaluated at the origin, we are considering only the right side derivative.  

Let $x(t)=X(t,\tau,\xi)$ be the solution of (\ref{lin2}) such that $x(\tau)=\xi\neq0$ and\newline $y(t)=Y(t,s,\omega)$ be the solution of (\ref{nolin2}) such that $y(s)=\omega\neq0.$

Since the system \textnormal{(\ref{lin2})} has nonuniform contraction, we can use Proposition \ref{chinoshindawi} (the implication from left to right, see Remark 6) to obtain a symmetric positive definite operator $\mathcal{S}(t)$ which define a quadratic Lyapunov function $V(t)$ associated to the system \textnormal{(\ref{lin2})}. Thus, by using the construction of $V(t)$, the inequality \textnormal{(\ref{cotaparaS})}, Remark \ref{obs1} and the Lipschitz constant $L_g$ of function $g$, we obtain that:

\begin{equation*}
\begin{split}
    \displaystyle\frac{dV(t,y(t))}{dt} =  & \left \langle \mathcal{S}'(t)y(t),y(t)\right \rangle + \left \langle \mathcal{S}(t)[A(t)y(t)+g(t,y(t))],y(t)\right \rangle \\
    & +  \left \langle \mathcal{S}(t)y(t),A(t)y(t)+g(t,y(t))\right \rangle,  \\\\
     = & \left \langle \mathcal{S}'(t)y(t)+\mathcal{S}(t)A(t)y(t)+A^{*}(t)\mathcal{S}(t)y(t),y(t) \right \rangle \\
     & + \left \langle \mathcal{S}(t)g(t,y(t)),y(t)\right \rangle + \left \langle \mathcal{S}(t)y(t),g(t,y(t))\right \rangle, \\\\
     \leq & \left \langle -[Id+2(\alpha_1-\mu_1)\mathcal{S}(t)y(t),y(t)]\right \rangle + 2\left \langle \mathcal{S}(t)g(t,y(t)),y(t)\right \rangle, \\\\
     \leq & \left \langle -2(\alpha_1-\mu_1)\mathcal{S}(t)y(t),y(t)\right \rangle + 2\left \langle \mathcal{S}(t) L_g y(t),y(t)\right \rangle, \\\\
     = & -2(\alpha_1-\mu_1)V(t,y(t))+2 L_g V(t,y(t)), \\\\
     \leq & -2[\alpha_1-\mu_1-L_g]V(t,y(t)),
\end{split}
\end{equation*}
and the first part of the Theorem \ref{lemafuncionlyapunov} follows. Besides, if we consider $x(t)$ as previously defined, then in the last inequality we have
\begin{equation}
\label{estimacionV}
\frac{dV(t,x(t))}{dt}\leq -2[\alpha_1-\mu_1]V(t,x(t))\leq -2[\alpha_1-\mu_1-L_g]V(t,x(t)).
\end{equation}

Now we will prove that second statement of this Theorem. 
From \cite[Lemma 2.4]{Liao} with $\mu(\cdot) = \exp(\cdot)$ and considering $\gamma=\alpha_1-\mu_1-L_g>0$, we have that
\begin{displaymath}
    V(t,x(t))\leq V(s,x(s))\exp(-\bar{\gamma}(t-s)), \quad t\geq s
\end{displaymath}
where $\bar{\gamma}=2\gamma$. From the last inequality we affirm that $V(t,x(t))$ is strictly decreasing and converges to $0$ as $t$ tends to infinity. Now given $\varepsilon'>0$, let $\ell=\ell(\varepsilon')>0$ such that there exists a unique $T=T(\tau,\xi)$ that satisfies 

\begin{displaymath}
    V(T,x(T))=\frac{\ell}{2}.
\end{displaymath}
It is fair to say that $T(\tau,\xi)$ is a continuous function of $(\tau,\xi)$ for $\xi\neq0.$ We use the function $T(\tau,\xi)$ to define a new function as follows: 
 
\begin{equation}
    H(\tau,\xi)= \left\{ \begin{array}{lcc}
             Y(\tau,T(\tau,\xi),X(T(\tau,\xi),\tau,\xi)) &   if  & \xi\neq0, \\
             \\0 &  if & \xi=0. \\
             \end{array}
   \right.
\end{equation} 

Clearly, $H(\tau,\xi)$ is a continuous function when $\xi\neq0$. With the purpose to discuss its continuity at $\xi=0$, we analyze the behaviour of $|T(\tau,\xi)-\tau|$ as $\xi$ tends to 0. By \textbf{(V1)} and Proposition \ref{propcongronwall} we have that
$$
\begin{array}{rcl} 
\displaystyle\frac{\ell}{2}& = & V(T(\tau,\xi),X(T(\tau,\xi),\tau,\xi)), \\\\
& \leq & \mathcal{K}^{2}\exp(2\upsilon T(\tau,\xi)) \left \|X(T(\tau,\xi),\tau,\xi) \right \|^2, \\ \\
& = & \mathcal{K}^{2}\exp(2 \upsilon T(\tau,\xi))\exp(2L_{F}|T(\tau,\xi)-\tau|)\left \| \xi\right \|^{2}.
\end{array}
$$

From the last inequality we can deduce that
\begin{equation}
\label{exponencialtiempo}
\exp(-|T(\tau,\xi)-\tau|)\leq \left (\frac{2 \mathcal{K}\exp(2 \upsilon T(\tau,\xi))\left \|\xi\right \|^2}{\ell}\right )^ { \frac{1}{2L_{F}}},
\end{equation}
where $L_F = |{\bar{a}_1}| + \delta K_{\delta, \varepsilon}.$ From the Remark 1 in \cite{Cast-Huerta} we can infer that the system \textnormal{(\ref{lin2})} has a nonuniform contraction, where its evolution operator admits nonuniformly bounded growth; and therefore this fact allows us deduce that the spectrum satisfies the following condition
$$\Sigma(C(t) + B(t)) = \bigcup_{i=1}^m [\bar{a}_i, \bar{b}_i] \subset (-\infty, 0).$$

Now by \cite[Lemma 2.3]{Liao}, there exists $\eta>0$ such that 
$$\begin{array}{rcl}
     \eta \left \| H(\tau,\xi)\right \|^2 & \leq & V(\tau,H(\tau,\xi)),  \\
     & \leq & V(\tau,Y(\tau,T,X(T,\tau,\xi))),
\end{array}$$
however, we have a string of consequences of algebraic work
$$\left \|\xi\right \|\leq \left (\frac{\ell \exp(-2\upsilon\tau)}{2\mathcal{K}^2}\right )^{\frac{1}{2}}\Rightarrow V(\tau,\xi)\leq \mathcal{K}^2\exp(2\upsilon\tau)\left \|\xi\right \|^2\leq \frac{\ell}{2}\Rightarrow T(\tau,\xi)\leq\tau.$$

It follows by \cite[Lemma 2.4]{Liao} and the estimation (\ref{exponencialtiempo}) that

$$\begin{array}{rcl}
     \left \|H(\tau,\xi)\right \|^2 & \leq & \eta^{-1}\exp(-\bar{\gamma}(\tau-T))V(T,Y(T,T,X(T,\tau,\xi))),  \\\\
     & = & \displaystyle\frac{\eta^{-1}\ell\exp(-\bar{\gamma}(\tau-T))}{2}, \\\\
     & \leq & \displaystyle\left (\frac{\eta^{-1}\ell}{2}\right )\left (\frac{2 \mathcal{K}\exp(2\upsilon T(\tau,\xi))\left \|\xi\right \|^2}{\ell}\right )^ { \frac{\bar{\gamma}}{2L_{F}}}.
\end{array}$$
Moreover, if $\displaystyle\left \|\xi\right \|\leq \left (\frac{\ell \exp(-2\upsilon\tau)}{2\mathcal{K}^2}\right )^{\frac{1}{2}}$ then we obtain that
$$\left \|H(\tau,\xi)\right \|\leq \displaystyle\left (\frac{\eta^{-1}\ell}{2}\right )^{\frac{1}{2}}\left (\frac{2 \mathcal{K}\exp(2\upsilon T(\tau,\xi))\left \|\xi\right \|^2}{\ell}\right )^ {\frac{\bar{\gamma}}{4L_{F}}}.$$

On the other hand, we proceed in the similar form to obtain a converse estimation for $H(\tau,\xi)$. By \cite[Lemma 2.3]{Liao} and Proposition \ref{propcongronwall} we have that

$$
\begin{array}{rcl} 
\displaystyle\frac{\ell}{2}& = & V(T(\tau,\xi),X(T(\tau,\xi),\tau,\xi)), \\ 
& \geq & \eta \left \|X(T(\tau,\xi),\tau,\xi)\right \|^2, \\ 
& \geq & \eta\left \| \xi\right \|^{2}\exp(-2L_{F}|T(\tau,\xi)-\tau|).
\end{array}
$$
and from the last inequality we obtain the following computation:
\begin{equation}
\label{exponencialtiempo2}
\exp(|T(\tau,\xi)-\tau|)\geq \left (\frac{2\eta\left \|\xi\right \|^2}{\ell}\right )^ { \frac{1}{2L_{F}}}.
\end{equation}

Also we can deduce that
$$\left \|\xi\right \|\geq\left (\frac{\ell}{2\eta}\right )^{\frac{1}{2}}\Rightarrow\frac{\ell}{2}\leq\eta\left \|\xi\right \|^2\leq V(\tau,\xi)\Rightarrow T(\tau,\xi)\geq \tau,$$
and when $\left \|\xi\right \|\geq\left (\frac{\ell}{2\eta}\right )^{\frac{1}{2}}$, by the inequality (\ref{exponencialtiempo2}) and the condition \textbf{(V3)}, we obtain that
$$
\begin{array}{rcl}
    \left \|H(\tau,\xi)\right \|^2 & \geq & \displaystyle\frac{\exp(-2\upsilon\tau)}{\mathcal{K}^2}V(\tau,H(\tau,\xi)), \\\\
     & = & \displaystyle\frac{\exp(-2\upsilon\tau)}{\mathcal{K}^2}V(\tau,Y(\tau,T,X(T,\tau,\xi))), \\\\
     & \geq & \displaystyle\frac{\exp(-2\upsilon\tau+\bar{\gamma}(T-\tau))}{\mathcal{K}^2}V(T,Y(T,T,X(T,\tau,\xi))), \\\\
     & = &  \displaystyle\frac{\ell \exp(-2\upsilon\tau)}{2\mathcal{K}^2}\exp(\bar{\gamma}(T-\tau)), \\\\
     & \geq &  \displaystyle\frac{\ell \exp(-2\upsilon\tau)}{2\mathcal{K}^2}\left (\frac{2\eta\left \|\xi\right \|^2}{
     \ell}\right )^ {\frac{\bar{\gamma}}{2L_{F}}}
\end{array}
$$
Rewriting the last estimation:
$$\left \|H(\tau,\xi)\right \|\geq \displaystyle\left (\frac{\ell \exp(-2\upsilon\tau)}{2\mathcal{K}^2}\right )^{\frac{1}{2}}\left (\frac{2\eta\left \|\xi\right \|^2}{\ell }\right )^ {\frac{\bar{\gamma}}{4L_{F}}}.$$

Now we must to prove that if $x(t)$ is a solution of (\ref{lin2}), then $H(t,x(t))$ is a solution of (\ref{nolin2}). 

When $\xi=0$, we can affirm that 
$$H(t,X(t,\tau,\xi))=H(t,0)=0.$$
In the case when $\xi\neq0$, we have that
\begin{displaymath}
\begin{split}
    H(t,X(t,\tau,\xi))= & Y(t,T(t,X(t,\tau,\xi)),X(T(t,X(t,\tau,\xi)),t,X(t,\tau,\xi))),\\
    = & Y(t,T(t,X(t,\tau,\xi)),X(T(t,X(t,\tau,\xi)),\tau,\xi)).
\end{split}
\end{displaymath}
On the one hand we have
\begin{equation}
\label{T(xi)}
\frac{\ell}{2}=H(T(\tau,\xi),X(T(\tau,\xi),\tau,\xi))=H(T(\tau,\xi),X(T(\tau,\xi),t,X(t,\tau,\xi))),
\end{equation}
and on the other hand 
\begin{equation}
\label{T(X(xi))}
\begin{split}
\frac{\ell}{2}= & H(T(t,X(t,\tau,\xi)),X(T(t,X(t,\tau,\xi)),t,X(t,\tau,\xi))),
\end{split}
\end{equation}
and by the equations (\ref{T(xi)}) and (\ref{T(X(xi))}), we deduce that $T(t,X(t,\tau,\xi))=T(\tau,\xi)$. Therefore for all $t, \tau \geq 0$, and $\xi\neq0$, we obtain
\begin{equation}
\label{htiempos}
H(t,X(t,\tau,\xi))=Y(t,T(\tau,\xi),X(T(\tau,\xi),\tau,\xi)),
\end{equation}
which is a solution of the system(\ref{nolin2}). 

Similar to the procedure just shown, we must to define a mapping 
\begin{equation}
    G(\tau,\xi)= \left\{ \begin{array}{lcc}
             X(\tau,S(\tau,\xi),Y(S(\tau,\xi),\tau,\xi)) &   if  & \xi\neq0, \\
             \\0 &  if & \xi=0, \\
             \end{array}
   \right.
\end{equation} 
where $S=S(\tau,\xi)$ is the unique time such that 
\begin{displaymath} 
V(S,y(S))=\frac{\ell }{2}.
\end{displaymath}

We can deducefor the function $G$ similar properties to the function $H$. Moreover we have that when $\xi\neq0$:
\begin{displaymath}
G(t,Y(t,\tau,\xi))=X(t,S(\tau,\xi),Y(S(\tau,\xi),\tau,\xi)),
\end{displaymath} 
which is obtained in a similar manner to (\ref{htiempos}). 

In order to prove that $H(\tau,G(\tau,\xi))=\xi$, we designate $S=S(\tau,y)$ and rewrite $\displaystyle\frac{\ell}{2}$ as follows 
\begin{equation}
\label{L1}
\displaystyle\frac{\ell}{2}=V(T(S,Y(S,\tau,y)),X(T(S,Y(S,\tau,y)),S,Y(S,\tau,y))), 
\end{equation}
 and as it can also be written as follows
 \begin{equation}
 \label{L2}
\displaystyle\frac{\ell}{2}=V(S,Y(S,\tau,y))=V(S,X(S,S,Y(S,\tau,y))),
 \end{equation}
therefore from the equations (\ref{L1}) and (\ref{L2}) we assure that 

\begin{equation}
\label{identidadtiempos}
T(S(\tau,y),Y(S(\tau,y),\tau,y))=S(\tau,y).
\end{equation}

Consequently and in order to prove that $H$ and $G$ are inverse functions of each other we have that 

\begin{equation*}
\begin{split}
    H(\tau,G(\tau,\xi))= & H(\tau,X(\tau,S(\tau,\xi),Y(S(\tau,\xi),\tau,\xi))),\\
    = & Y(\tau,T(S,Y(S,\tau,\xi)),X(T(S,Y(S,\tau,\xi)),S,Y(S,\tau,\xi))),
\end{split}
\end{equation*}
and from the equality(\ref{identidadtiempos}), we obtain the following identity

\begin{equation*}
\begin{split}
    H(\tau,G(\tau,\xi))= & Y(\tau,S,X(S,S,Y(S,\tau,\xi))),\\
    = & Y(\tau,S,Y(S,\tau,\xi))=\xi.
\end{split}
\end{equation*}

Similarly we can ratify that
\begin{equation*}
G(\tau,H(\tau,\xi))=\xi,
\end{equation*}
for all $\tau\in\R_0^{+}$, $\xi\in\R^{n}$.
\qed

\section{Proof Theorem \ref{previoalprincipal}}

This result is a consequence of the Theorem \ref{lemafuncionlyapunov} and it is inspired by the Lemma 2 in the work of F. Lin \cite[p. 47]{Lin2} . Indeed, we have that the systems (\ref{lin1}) and (\ref{lin2}) are topologically equivalent through of the matrix $S(\delta,t)$; moreover the systems (\ref{lin2}) and (\ref{sistematransformado}) are topologically equivalent through of the matrix $S(\delta,t)$ also. If we denote $g(t,y)=S^{-1}(\delta,t)f(t,S(\delta,t)y)$, then $g\in\mathcal{A}_2$ and additionally $L_g=M_1^2L_f$. In fact,  
\begin{displaymath}
\begin{split}
\left \|g(t,y_1)-g(t,y_2)\right \|& =\left \|S^{-1}(\delta,t)f(t,S(\delta,t)y_1)-S^{-1}(\delta,t)f(t,S(\delta,t)y_2)\right \|, \\\\
& \leq M_1 \exp(\beta t)\left \| f(t,S(\delta,t)y_1)-f(t,S(\delta,t)y_2)\right \|, \\\\
& \leq M_1 L_f \exp(\beta t)\exp(-2\beta t)\left \|S(\delta,t)y_1-S(\delta,t)y_2\right \|, \\\\
& \leq M_1^{2}L_f\left \|y_1-y_2\right \|.
\end{split}
\end{displaymath}
Since the conjdition $L_g\leq\delta < \alpha - \mu$ is satisfied, combining the Theorem  \ref{lemafuncionlyapunov} and  the fact that topological equivalence is a equivalence relation, the systems (\ref{lin1}) and (\ref{nolin1}) are topologically equivalent.

\section{Proof Theorem \ref{ultimo}}

This proof follows the lines of the papers \cite{CR2018} and \cite{Lin2}. We take the auxiliar function $f_0(t,x)=f(t,x)-f(t,0)$ and we affirm that when $f\in\mathcal{A}_1$ then $f_0\in\mathcal{A}_2$. Indeed, we have that $f_0(t,0)=0$ and the following estimate: 
$$\left \| f_0(t,x_1)-f_0(t,x_2)\right \|=\left \| f(t,x_1)-f(t,x_2)\right \|\leq L_f \exp(-2\beta t) \left \|x_1-x_2\right \|),$$
for any $t\in\R_0^{+}, x_1, x_2\in\R^{n}$ and some $\beta\geq0$. Since $f$ and $f_0$ have the same Lipschitz constant, then by Theorem \ref{previoalprincipal} and inequality (\ref{cotasLf}) it is enough to prove that  the systems (\ref{nolin1}) and 
\begin{equation}
\label{origen0}
\dot{x}=A(t)x+f_0(t,x)
\end{equation}
are topologically equivalent. 
The condition {\textbf{(P3)}} ensures the existence of constants $K\geq 1$, $\alpha >0$ and $\mu\geq0$ satisfying the inequality (\ref{estabilidadexponencialnouniforme}). Besides for the unique solution $X(t,\tau,\xi)$ of system (\ref{origen0}) passing through $\xi$ at $t=\tau$, we must to define the function $F:\R_0^{+}\times\R^n\times\boldsymbol{B}\rightarrow \R^n$, with $\boldsymbol{B}=\R_0^{+}\times\R^n$, as follows

\begin{displaymath}
\begin{split}
F(t,y,(\tau,\xi))& =f(t,y+X(t,\tau,\xi))-f_0(t,X(t,\tau,\xi)), \\
& = f(t,y+X(t,\tau,\xi))-f(t,X(t,\tau,\xi))+f(t,0), \\ 
\end{split}
\end{displaymath}
moreover if we define $K_0=\sup\left \{t\in\R_0^{+}\ \colon \ \left \|f(t,0)\right \|\right \}$ then we deduce the following estimates:
\begin{displaymath}
\left\{ \begin{array}{lcc}
             \left \| F(t,y,(\tau,\xi))\right \|\leq L_f\exp(-2\beta t)\left \|y\right \|+K_0, \\
             \\ \left \| F(t,y_1,(\tau,\xi))-F(t,y_2,(\tau,\xi))\right \|\leq L_f\exp(-2\beta t)\left \| y_1-y_2\right \|. 
             \end{array}
   \right.
\end{displaymath}

We observe that the two last inequalities allow us to affirm that $F$ verifies the hypothesis of Proposition \ref{propespaciobanach}, which implies that the system 
\begin{equation}
\label{sistema4variables}
\dot{z}=A(t)z+F(t,z,(\tau,\xi))
\end{equation}
has an unique bounded and continues solution $Z(t,(\tau,\xi))$ defined by 
\begin{displaymath}
Z(t,(\tau,\xi))=\displaystyle\int_{0}^{t}\Phi(t,s)[f(s,Z(s,(\tau,\xi))+X(s,\tau,\xi))-f_0(s,X(s,\tau,\xi))]ds
\end{displaymath}
and this solution is bounded and continuous with the norm 
$$\left \|Z\right \|=\sup_{t\in\R_0^{+},(\tau,\xi)\in\R_0^{+}\times\R^n}\exp(-\beta t)\left \| Z(t,(\tau,\xi))\right \|=M_0<+\infty.$$

Now, let us construct the map $H:\R_0^{+}\times\R^{n}\rightarrow \R^{n}$ as follows
\begin{equation}
\label{homeo}
    H(\tau,\xi)=\xi+Z(\tau,(\tau,\xi)).
\end{equation}

\begin{lemma}
For any $(r,t)\in\R_0^{+}\times\R_0^{+}$ and $(\tau,\xi)\in\R_0^{+}\times\R^n$, the following identity is satisfied: 
\begin{equation}
\label{solucionacotadacondicion}
    Z(r,(t,X(t,\tau,\xi)))=Z(r,(\tau,\xi)).
\end{equation}
\end{lemma}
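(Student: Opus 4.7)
The plan is to exploit the observation that the parameter $(\tau,\xi) \in \boldsymbol{B}$ enters the nonlinearity $F$ only through the orbit $s \mapsto X(s,\tau,\xi)$, so that the cocycle (semigroup) property of the flow of (\ref{origen0}) forces the two systems corresponding to the parameters $(\tau,\xi)$ and $(t,X(t,\tau,\xi))$ to be \emph{literally the same} equation. Uniqueness of the bounded solution established in Proposition \ref{propespaciobanach} then finishes the job.

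More concretely, I would proceed in three short steps. First, fix $t \in \R_0^+$ and $(\tau,\xi) \in \R_0^+ \times \R^n$ and set $\eta := X(t,\tau,\xi)$. By uniqueness of solutions of (\ref{origen0}) (which is a standard consequence of the Lipschitz property of $f_0$, cf.\ Proposition \ref{propcongronwall}), the cocycle identity
\begin{equation*}
X(s,t,\eta) = X(s,t,X(t,\tau,\xi)) = X(s,\tau,\xi)
\end{equation*}
holds for every $s \in \R_0^+$. Substituting this into the definition of $F$ gives
\begin{equation*}
F(s,y,(t,\eta)) = f(s,y+X(s,\tau,\xi)) - f(s,X(s,\tau,\xi)) + f(s,0) = F(s,y,(\tau,\xi))
\end{equation*}
for every $s \in \R_0^+$ and $y \in \R^n$.

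Second, this equality means that the two nonautonomous equations
\begin{equation*}
\dot z = A(s)z + F(s,z,(\tau,\xi)) \qquad \text{and} \qquad \dot z = A(s)z + F(s,z,(t,\eta))
\end{equation*}
coincide. Third, Proposition \ref{propespaciobanach} produces for each choice of parameter a \emph{unique} solution bounded in the norm $\|\cdot\|_A$, namely $Z(\cdot,(\tau,\xi))$ and $Z(\cdot,(t,\eta))$ respectively; but since the underlying equations are identical, uniqueness forces $Z(r,(t,X(t,\tau,\xi))) = Z(r,(\tau,\xi))$ for every $r \in \R_0^+$, which is precisely (\ref{solucionacotadacondicion}).

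I do not foresee a genuine obstacle here: the only thing to verify carefully is that the hypotheses of Proposition \ref{propespaciobanach} apply uniformly in $\kappa = (\tau,\xi) \in \boldsymbol{B}$ (bounded Lipschitz constant $L_f$, and the uniform bound $K_0 = \sup_t\|f(t,0)\|$), which was already checked in the paragraph establishing the existence of $Z$. Once this is in place, the proof is essentially a one-line consequence of the cocycle property combined with the uniqueness of the bounded solution.
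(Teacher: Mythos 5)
Your proof is correct and takes essentially the same approach as the paper: both rest on the cocycle identity $X(s,t,X(t,\tau,\xi)) = X(s,\tau,\xi)$, which makes the two forced equations coincide. The paper then re-derives uniqueness explicitly via a contraction estimate on the difference of the two integral equations (using $\tfrac{KL_f}{\alpha}<1$), whereas you simply invoke the uniqueness conclusion of Proposition~\ref{propespaciobanach}; this is just a cleaner packaging of the same argument.
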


\begin{proof}
Firstly, we describe the solution of the system \ref{sistema4variables} as follows
\begin{equation*}
\begin{split}
Z(r,(t,X(t,\tau,\xi)) & = \displaystyle\int_{0}^{r}\Phi(r,s)[f(s,Z(s,(t,X(t,\tau,\xi)))+X(s,t,X(t,\tau,\xi))) \\
& \quad -f_0(s,X(s,t,X(t,\tau,\xi)))] ds,\\
& = \displaystyle\int_{0}^{r}\Phi(r,s)[f(s,Z(s,(t,X(t,\tau,\xi)))+X(s,\tau,\xi))] \\
& \quad -f_0(s,X(s,\tau,\xi))] ds
\end{split}
\end{equation*}
and also we have 
\begin{equation*}
    Z(r,(\tau,\xi))=\displaystyle\int_{0}^{r}\Phi(r,s)[f(s,Z(s,(\tau,\xi))+X(s,\tau,\xi))-f_0(s,X(t,\tau,\xi))] ds.
\end{equation*}

Secondly, we estimate the norm of the subtraction between $Z(r,(t,X(t,\tau,\xi)))$ and $Z(r,(\tau,\xi))$:

\begin{equation*}
\begin{array}{lc}
    \left \| Z(r,(t,X(t,\tau,\xi)))-Z(r,(\tau,\xi))\right \|=\\ 
    \\
     \left \| \displaystyle\int_{0}^{r}\Phi(r,s)[f(s,Z(s,(t,X(t,\tau,\xi)))+X(s,\tau,\xi))-f(s,Z(s,(\tau,\xi))+X(s,\tau,\xi))]ds\right \|\\
     \\
     \leq \displaystyle\int_{0}^{r}KL_f\exp(-\alpha(r-s)+\mu s)\exp(-2\mu s)\left \|Z(s,(t,X(t,\tau,\xi)))-Z(s,(\tau,\xi)) \right \| ds, \\
     \\
     \leq \displaystyle\frac{KL_f}{\alpha}\sup_{r\in\R_0^{+}}\exp(-\mu r)\left \|Z(r,(t,X(t,\tau,\xi)))-Z(r,(\tau,\xi)) \right \| 
\end{array}
\end{equation*}
and since $\frac{KL_f}{\alpha}<1$, the the Lemma is fulfilled.
\end{proof}

\begin{lemma}
If $t\mapsto X(t,\tau,\xi)$ is solution of the system  \textnormal{(\ref{origen0})} such that $X(\tau,\tau,\xi)=\xi$, then $t\mapsto H(t,X(t,\tau,\xi))$ is solution of the system \textnormal{(\ref{nolin1})}.
\end{lemma}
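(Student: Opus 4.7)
The plan is to exploit the previous lemma to rewrite $H(t,X(t,\tau,\xi))$ in a form that decouples the two summands, and then differentiate term by term, letting the $f_0$ contributions cancel by construction of $F$.

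First I would set $y(t) := H(t,X(t,\tau,\xi))$ and use the definition (\ref{homeo}) to write
\begin{equation*}
y(t) = X(t,\tau,\xi) + Z(t,(t,X(t,\tau,\xi))).
\end{equation*}
The key observation is that identity (\ref{solucionacotadacondicion}) (the previous lemma), evaluated at $r=t$, yields $Z(t,(t,X(t,\tau,\xi)))=Z(t,(\tau,\xi))$. Hence $y(t)=X(t,\tau,\xi)+Z(t,(\tau,\xi))$, an expression in which the second argument of $Z$ is \emph{frozen} at $(\tau,\xi)$. This removes the apparent awkwardness that $t$ appears in two different slots of $Z$, which would otherwise force a chain-rule computation in the second variable.

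Next I would differentiate both summands with respect to $t$. Because $X(\cdot,\tau,\xi)$ solves (\ref{origen0}), we have $\dot X(t,\tau,\xi)=A(t)X(t,\tau,\xi)+f_0(t,X(t,\tau,\xi))$. Because $Z(\cdot,(\tau,\xi))$ is, by construction in Proposition \ref{propespaciobanach}, the bounded solution of (\ref{sistema4variables}) with parameter $(\tau,\xi)$, we have $\partial_t Z(t,(\tau,\xi))=A(t)Z(t,(\tau,\xi))+F(t,Z(t,(\tau,\xi)),(\tau,\xi))$. Substituting the definition
\begin{equation*}
F(t,z,(\tau,\xi))=f(t,z+X(t,\tau,\xi))-f_0(t,X(t,\tau,\xi))
\end{equation*}
and summing the two derivatives, the two copies of $f_0(t,X(t,\tau,\xi))$ cancel and the $A(t)$ terms combine into $A(t)y(t)$, leaving
\begin{equation*}
\dot y(t)=A(t)y(t)+f\bigl(t,Z(t,(\tau,\xi))+X(t,\tau,\xi)\bigr)=A(t)y(t)+f(t,y(t)),
\end{equation*}
which is exactly system (\ref{nolin1}).

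There is no real analytical obstacle here; the proof reduces to a bookkeeping computation. The only subtle point, and the one where the work actually happens, is the reduction $Z(t,(t,X(t,\tau,\xi)))=Z(t,(\tau,\xi))$: without it, one would have to differentiate $Z$ in its second argument, which is not obviously well-behaved. Once the previous lemma is invoked the remainder is a two-line cancellation, so I expect the write-up to consist mainly of clearly displaying that substitution and the one-line cancellation of $f_0$.
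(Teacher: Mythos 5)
Your proposal is correct and follows exactly the paper's argument: the paper also invokes the identity $Z(t,(t,X(t,\tau,\xi)))=Z(t,(\tau,\xi))$ from the preceding lemma to reduce $H(t,X(t,\tau,\xi))$ to $X(t,\tau,\xi)+Z(t,(\tau,\xi))$, and then dismisses the remainder as ``a simple computation.'' You have simply spelled out that computation (the cancellation of the $f_0$ terms when differentiating), which is a useful clarification but not a different route.
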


\begin{proof}
Combining the equations (\ref{homeo}) and (\ref{solucionacotadacondicion}), we have that
\begin{equation*}
    H(t,X(t,\tau,\xi))=X(t,\tau,\xi)+Z(t,(\tau,\xi)), 
\end{equation*}
and a simple computation allows us to verify the statement.
\end{proof}

\begin{lemma}
The map $\xi\mapsto H(\tau,\xi)$ is a continuous function for any fixed $\tau\in\R_0^{+}$.
\end{lemma}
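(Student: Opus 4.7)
Since $H(\tau,\xi)=\xi+Z(\tau,(\tau,\xi))$ and $\xi\mapsto\xi$ is obviously continuous, the problem reduces to showing that, for fixed $\tau\in\R_0^{+}$, the map $\xi\mapsto Z(\tau,(\tau,\xi))$ is continuous. My plan is to view $Z(\cdot,(\tau,\xi))$ as the unique fixed point of the parameter-dependent contraction constructed in Proposition \ref{propespaciobanach} and invoke the standard continuous-dependence-on-parameters argument for uniform contractions in a Banach space.

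Fix $\xi_{0}\in\R^{n}$ and abbreviate $\kappa=(\tau,\xi)$, $\kappa_{0}=(\tau,\xi_{0})$. On the Banach space $(\boldsymbol{C},\|\cdot\|_{A})$ consider
$$T_{\kappa}(U)(r)=\int_{0}^{r}\Phi(r,s)\bigl[f(s,U(s)+X(s,\tau,\xi))-f_{0}(s,X(s,\tau,\xi))\bigr]\,ds.$$
Proposition \ref{propespaciobanach} guarantees that $T_{\kappa}$ is a contraction with constant $q=KL_{f}/\alpha<1$ uniform in $\kappa$, whose unique fixed point is $Z(\cdot,\kappa)$. The telescoping estimate
$$\|Z(\cdot,\kappa)-Z(\cdot,\kappa_{0})\|_{A}\leq\frac{1}{1-q}\,\|T_{\kappa}(Z(\cdot,\kappa_{0}))-T_{\kappa_{0}}(Z(\cdot,\kappa_{0}))\|_{A}$$
reduces everything to showing that the right-hand side tends to $0$ as $\xi\to\xi_{0}$, because then the elementary bound $\|Z(\tau,\kappa)-Z(\tau,\kappa_{0})\|\leq e^{\mu\tau}\|Z(\cdot,\kappa)-Z(\cdot,\kappa_{0})\|_{A}$ yields the pointwise continuity we want.

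Expanding the difference of integrands and using that $f$ and $f_{0}=f-f(\cdot,0)$ share the Lipschitz constant $L_{f}e^{-2\beta s}$, a direct calculation produces
$$\|F(s,Z(s,\kappa_{0}),\kappa)-F(s,Z(s,\kappa_{0}),\kappa_{0})\|\leq 2L_{f}\,e^{-2\beta s}\,\|X(s,\tau,\xi)-X(s,\tau,\xi_{0})\|.$$
Combined with $\|\Phi(r,s)\|\leq Ke^{-\alpha(r-s)+\mu s}$ and multiplication by the weight $e^{-\mu r}$, the required convergence becomes
$$\sup_{r\geq 0}\int_{0}^{r}e^{-(\alpha+\mu)(r-s)}\,e^{-2\beta s}\,\|X(s,\tau,\xi)-X(s,\tau,\xi_{0})\|\,ds\longrightarrow 0.$$
For $s$ in the bounded range $[0,\tau]$, classical continuous dependence on initial conditions (Proposition \ref{propcongronwall}) gives uniform convergence $\|X(s,\tau,\xi)-X(s,\tau,\xi_{0})\|\to 0$ as $\xi\to\xi_{0}$. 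For $s\geq\tau$, a variation-of-constants argument together with \textbf{(P3)} and the hypothesis $KL_{f}<\alpha$ shows that $\|X(s,\tau,\xi)-X(s,\tau,\xi_{0})\|$ decays exponentially in $s$, so the tail of the integral is controlled.

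The main obstacle is checking the uniformity in $r$: after splitting the integral as $[0,r]=[0,R]\cup[R,r]$, both the head and the tail must be made simultaneously small by first choosing $R$ large and then $\xi$ sufficiently close to $\xi_{0}$, independently of $r$. This is exactly where the quantitative hypothesis $L_{f}\leq\min\{\delta/M_{1}^{2},\alpha/K\}$ from Theorem \ref{ultimo} is decisive: it forces $KL_{f}/\alpha<1$ and supplies enough dichotomy-driven decay to dominate any Gronwall growth of $X(s,\tau,\xi)-X(s,\tau,\xi_{0})$ coming from Proposition \ref{propcongronwall}. Once the uniform tail smallness is in place, the continuity of $\xi\mapsto H(\tau,\xi)$ follows.
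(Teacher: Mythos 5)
Your route is correct in spirit but genuinely different from the paper's. You invoke the parametrized--contraction estimate $\|Z(\cdot,\kappa)-Z(\cdot,\kappa_0)\|_A \le (1-q)^{-1}\|T_\kappa(Z(\cdot,\kappa_0))-T_{\kappa_0}(Z(\cdot,\kappa_0))\|_A$ and must drive the right-hand side, a supremum over all $r\ge 0$, to zero; this forces you to control $\|X(s,\tau,\xi)-X(s,\tau,\xi_0)\|$ for every $s\ge 0$, including $s>\tau$, which you propose to do via variation of constants together with \textnormal{(P3)}. The paper instead stays with the Picard iterates $Z_j\to Z$: it evaluates them only at the fixed time $r=\tau$, proves by induction that each $\xi\mapsto Z_j(\tau,(\tau_0,\xi))$ is continuous using a Gronwall estimate confined to $s\in[0,\tau]$, and then passes to the limit by the geometric convergence of the iterates that is uniform in $\kappa$; no decay estimate for $s>\tau$ is ever needed. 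Your abstract fixed-point argument is cleaner and avoids the three-term $\varepsilon/3$ bookkeeping, at the cost of the tail estimate, which you correctly flag as the obstacle but only sketch (one must actually establish $\|X(s,\tau,\xi)-X(s,\tau,\xi_0)\|\le C(\tau)\|\xi-\xi_0\|e^{-\alpha(s-\tau)}$ for $s\ge\tau$ from \textnormal{(P3)}, $KL_f<\alpha$, and the implicit relation between $\beta$ and $\mu$ already needed to invoke Proposition \ref{propespaciobanach}). One further precision issue: Proposition \ref{propcongronwall} requires the vector field to lie in $\mathcal{A}_2$, while the right-hand side of (\ref{origen0}) is $A(t)x+f_0(t,x)$ with $\|A(t)\|\le\mathcal{M}e^{\nu t}$ unbounded, so that Proposition does not literally apply; on the compact interval $[0,\tau]$ a direct Gronwall argument with the $\tau$-dependent bound on $\|A\|$ still yields $\|X(s,\tau,\xi)-X(s,\tau,\xi_0)\|\le C(\tau)\|\xi-\xi_0\|$, so your conclusion survives, but it should not be credited to Proposition \ref{propcongronwall} as stated.
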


\begin{proof}
By the equation (\ref{homeo}), the only thing that we should to prove is that the map $\xi\mapsto Z(\tau,(\tau_0,\xi))$ is a continuous function for any fixed $\tau$. Indeed, let us recall that $\tau\mapsto Z(\tau,(\tau,\xi))$ is the unique bounded solution in $\boldsymbol{C}$ of the system (\ref{sistema4variables}), which was constructed by successive approximations in Proposition \ref{propespaciobanach}, this means that we have
\begin{displaymath}
\label{limiteZ}
    \lim_{j\rightarrow +\infty}Z_j(\tau,(\tau_0,\xi))=Z(\tau,(\tau_0,\xi)),
\end{displaymath}
where 
\begin{equation*}
    Z_{j+1}(\tau,(\tau_0,\xi))=\displaystyle\int_{0}^{\tau}\Phi(\tau,s)F(s,Z_j(s,(\tau_0,\xi)),(\tau_0,\xi)) ds.
\end{equation*}

Additionally we know that for any $\varepsilon>0$, there exists $J=J(\varepsilon)>0$ such that for any $j>J$ it follows that 
\begin{equation*}
\begin{split}
\left \|Z(\tau,(\tau_0,\xi))-Z(\tau,(\tau_0,\xi')) \right \| & \leq \left \| Z(\tau,(\tau_0,\xi))-Z_j(\tau,(\tau_0,\xi))\right \| \\\\
& \quad +\left \| Z_j(\tau,(\tau_0,\xi))-Z_j(\tau,(\tau_0,\xi'))\right \|\\\\
& \quad + \left \| Z_j(\tau,(\tau_0,\xi'))-Z(\tau,(\tau_0,\xi'))\right \| \\
& < \frac{2}{3}\varepsilon + \left \| Z_j(\tau,(\tau_0,\xi))-Z_j(\tau,(\tau_0,\xi'))\right \|.
\end{split}
\end{equation*}

We will prove by induction that for any $j\in\mathbb{N}$, there exists $\delta_j>0$ such that
\begin{equation}
\label{Zjcontinuidad}
    \left \| Z_j(\tau,(\tau_0,\xi))-Z_j(\tau,(\tau_0,\xi'))\right \|<\frac{\varepsilon}{3} \quad \textnormal{when}\quad \left \| \xi-\xi'\right \|<\delta_j.
\end{equation}

Indeed, we cosider an initial term
$$Z_0(\tau,(\tau,\xi))=Z_0(\tau,(\tau,\xi'))=\phi_0\in\boldsymbol{C}$$
and we suppose that (\ref{Zjcontinuidad}) is verified for some $j$ as inductive hypothesis. Now, we have that
$$\left \| Z_{j+1}(\tau,(\tau_0,\xi))-Z_{j+1}(\tau,(\tau_0,\xi'))\right \|\leq \Delta$$
where
\begin{equation*}
    \Delta= \displaystyle\left \| \int_{0}^{\tau}\Phi(\tau,s)[F(s,Z_j(s,(\tau_0,\xi)),(\tau_0,\xi))-F(s,Z_j(s,(\tau_0,\xi')),(\tau_0,\xi'))] ds\right \|.
\end{equation*}

From the definition and properties of $F$, by Gronwall's Lemma and inductive hypothesis, we have that
\begin{equation*}
\begin{split}
    \Delta &\leq\displaystyle\left \| \int_{0}^{\tau}\Phi(\tau,s)[f(s,Z_j(s,(\tau_0,\xi))+X(s,\tau_0,\xi))-f(s,Z_j(s,(\tau_0,\xi'))+X(s,\tau_0,\xi'))] ds\right \| \\
    \\ 
    & \quad + \displaystyle\left \| \int_{0}^{\tau}\Phi(\tau,s)[f_0(s,X(s,\tau_0,\xi))-f_0(s,X(s,\tau_0,\xi'))] ds\right \|,\\
    \\
    & \leq \int_{0}^{\tau}KL_f\exp(-\alpha(\tau-s)-\mu s)[\left \|Z_j(s,(\tau_0,\xi))-Z_j(s,(\tau_0,\xi'))\right \| + \left \|X(s,\tau_0,\xi)-X(s,\tau_0,\xi') \right \|] \\ 
    \\ 
    & \quad + \int_{0}^{\tau}KL_{f_0}\exp(-\alpha(\tau-s)-\mu s)[\left \|X(s,\tau_0,\xi)-X(s,\tau_0,\xi') \right \|], \\
    \\ 
    & \leq \displaystyle\frac{\varepsilon}{3}KL_f\int_{0}^{\tau}\exp(-\alpha(\tau-s)-\mu s+\mu s)ds \\
    \\
    & \quad + K(2L_f)\int_{0}^{\tau}\exp(-\alpha(\tau-s))\left \|\xi-\xi'\right \|\exp(L_F(\tau-s))ds,\\
    \\
    & \leq \displaystyle\frac{\varepsilon}{3}\frac{KL_f}{\alpha}+\frac{K(2L_f)}{\alpha}\exp(L_f\tau)\left \|\xi-\xi'\right \|
\end{split}
\end{equation*}
and (\ref{Zjcontinuidad}) is satisfied for $j+1$ when we choose $$\displaystyle\delta_{j+1}=\min\left \{\delta_j,\left (1-\frac{KL_f}{\alpha}\right )\exp(-L_f\tau)\frac{\alpha}{K(2L_f)}\frac{\varepsilon}{3}\right \}$$
and the continuity of $\xi\mapsto Z(\tau,(\tau_0,\xi))$ is satified. All of the above allows us to conclude that $H$ is continuous for any fixed $\tau$.

\end{proof}

\begin{remark}
We note that if $Y(t,\tau,\xi)$ is the unique solution of the system \textnormal{(\ref{nolin1})} passing through $\xi$ at $t=\tau$, we must to define the function $\tilde{F}:\R_0^{+}\times\R^n\times\boldsymbol{B}\rightarrow\R^n$ as follows 
\begin{displaymath}
\begin{split}
    \tilde{F}(t,\tilde{y},(\tau,\xi)) &= f_0(t,\tilde{y}+Y(t,\tau,\xi))-f(t,Y(t,\tau,\xi)), \\
    &= f(t,\tilde{y}+Y(t,\tau,\xi))-f(t,0)-f(t,Y(t,\tau,\xi)).
\end{split}
\end{displaymath}
and we obtain the following estimates: 
\begin{displaymath}
\left\{ \begin{array}{lcc}
             \left \| \tilde{F}(t,\tilde{y},(\tau,\xi))\right \|\leq L_f\exp(-2\beta t)\left \|\tilde{y}\right \|+K_0, \\
             \\ \left \| \tilde{F}(t,\tilde{y}_1,(\tau,\xi))-\tilde{F}(t,\tilde{y}_2,(\tau,\xi))\right \|\leq L_f\exp(-2\beta t)\left \| \tilde{y}_1-\tilde{y}_2\right \|. 
             \end{array}
   \right.
\end{displaymath}

In the same form $\tilde{F}$ satisfies the hypothesis of Proposition \ref{propespaciobanach}, which implies that the system

\begin{displaymath}
    \dot{z}=A(t)z+\tilde{F}(t,z,(\tau,\xi))
\end{displaymath}
has an unique bounded and continuous solution $\tilde{Z}(t,(\tau,\xi))$ defined by 

\begin{displaymath}
    \tilde{Z}(t,(\tau,\xi))=\displaystyle\int_{0}^{t}\Phi(t,s)[f_0(s,\tilde{Z}(s,(\tau,\xi))+Y(s,\tau,\xi))-f(s,Y(s,\tau,\xi))]ds.
\end{displaymath}
\end{remark}

As a consequence of the previous Remark, we construct the map \newline $G:\R_0^{+}\times\R^n\rightarrow\R^n$ as follows
\begin{displaymath}
    G(\tau,\xi)=\xi+\tilde{Z}(\tau,(\tau,\xi)).
\end{displaymath}
and we prove the following results that are similar to the previous one.

\begin{lemma}
For any $(r,t)\in\R_0^{+}\times\R_0^{+}$ and $(\tau,\xi)\in\R_0^{+}\times\R^n$ the following identify is satified:
\begin{equation*}
    \tilde{Z}(r,(t,Y(t,\tau,\xi)))=\tilde{Z}(r,(\tau,\xi)).
\end{equation*}
\end{lemma}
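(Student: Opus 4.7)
The plan is to imitate the proof of the earlier analogous Lemma for $Z$, since the roles of $X$ and $Y$ are symmetric with respect to their governing systems. The key structural input is the cocycle property $Y(s,t,Y(t,\tau,\xi)) = Y(s,\tau,\xi)$, which follows from the uniqueness of solutions of \textnormal{(\ref{nolin1})}, together with the contraction factor $KL_f/\alpha<1$ coming from \textnormal{(\ref{cotasLf})} and the nonuniform exponential dichotomy of \textnormal{(\ref{lin1})}.

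First I would write out $\tilde{Z}(r,(t,Y(t,\tau,\xi)))$ using its defining integral equation, namely
\[
\tilde{Z}(r,(t,Y(t,\tau,\xi))) = \int_{0}^{r}\Phi(r,s)\bigl[f_0(s,\tilde{Z}(s,(t,Y(t,\tau,\xi))) + Y(s,t,Y(t,\tau,\xi))) - f(s,Y(s,t,Y(t,\tau,\xi)))\bigr]\,ds,
\]
and then apply the cocycle identity $Y(s,t,Y(t,\tau,\xi)) = Y(s,\tau,\xi)$ to replace both occurrences of $Y(s,t,Y(t,\tau,\xi))$ by $Y(s,\tau,\xi)$. Side by side with the defining integral equation for $\tilde{Z}(r,(\tau,\xi))$, this puts both expressions in a form where only the self-argument of $\tilde{Z}$ inside $f_0$ differs.

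Next I would estimate the norm of the difference. The $f$-terms cancel after the substitution, so only the $f_0$ contribution remains, and by the Lipschitz bound on $f_0$ (same constant $L_f$ since $f_0(t,x)-f_0(t,y)=f(t,x)-f(t,y)$) together with \textnormal{(\textbf{P3})}, the integral is bounded by
\[
\int_{0}^{r} K L_f \exp(-\alpha(r-s)+\mu s)\exp(-2\mu s)\,\bigl\|\tilde{Z}(s,(t,Y(t,\tau,\xi)))-\tilde{Z}(s,(\tau,\xi))\bigr\|\,ds.
\]
Multiplying by $\exp(-\mu r)$ inside the norm and estimating $\exp(-\mu s)\|\cdot\|$ by the $\|\cdot\|_A$ norm, I obtain
\[
\bigl\|\tilde{Z}(\cdot,(t,Y(t,\tau,\xi)))-\tilde{Z}(\cdot,(\tau,\xi))\bigr\|_A \le \frac{KL_f}{\alpha}\,\bigl\|\tilde{Z}(\cdot,(t,Y(t,\tau,\xi)))-\tilde{Z}(\cdot,(\tau,\xi))\bigr\|_A.
\]
Since $KL_f/\alpha<1$ by \textnormal{(\ref{cotasLf})}, the norm must vanish, which yields the desired identity.

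The only nontrivial step is verifying the cocycle identity $Y(s,t,Y(t,\tau,\xi))=Y(s,\tau,\xi)$, but this is immediate from uniqueness of solutions of \textnormal{(\ref{nolin1})}, which holds because $f$ is Lipschitz in $x$ by \textnormal{(\textbf{P4})}. The rest is bookkeeping; I expect no genuine obstacle beyond making sure the $\|\cdot\|_A$-norm manipulation matches the one used in the $Z$-version of this lemma.
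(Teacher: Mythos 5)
Your proposal is correct and takes exactly the approach the paper intends: it mirrors the proof of the earlier lemma for $Z$, replacing $X$ by $Y$, $Z$ by $\tilde{Z}$, and swapping the roles of $f$ and $f_0$ inside the integral, with the $f$-terms cancelling after the cocycle substitution $Y(s,t,Y(t,\tau,\xi))=Y(s,\tau,\xi)$ and the same contraction factor $KL_f/\alpha<1$ forcing the $\|\cdot\|_A$-norm of the difference to vanish. The paper does not write out this proof separately, stating only that it is similar to the previous one, and your reconstruction matches that intended argument.
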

\begin{lemma}
If $t\mapsto Y(t,\tau,\xi)$ is solution of the system \textnormal{(\ref{lin1})} such that $Y(\tau,\tau,\xi)=\xi$, then $t\mapsto G(t,Y(t,\tau,\xi))$ is solution of the system \textnormal{(\ref{origen0})}.
\end{lemma}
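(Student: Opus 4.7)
The plan is to mirror the proof of the analogous statement for $H$ (Lemma 3 above), which was a one-line consequence of the identity $Z(r,(t,X(t,\tau,\xi)))=Z(r,(\tau,\xi))$ together with the defining equation for $H$. Here the ingredients are already in place: the definition $G(\tau,\xi)=\xi+\tilde{Z}(\tau,(\tau,\xi))$ and the immediately preceding lemma giving $\tilde{Z}(r,(t,Y(t,\tau,\xi)))=\tilde{Z}(r,(\tau,\xi))$.

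First I would substitute $\xi \mapsto Y(t,\tau,\xi)$ and $\tau \mapsto t$ in the definition of $G$, obtaining
\[
G(t,Y(t,\tau,\xi)) = Y(t,\tau,\xi) + \tilde{Z}(t,(t,Y(t,\tau,\xi))),
\]
and then invoke the previous lemma to rewrite this as
\[
G(t,Y(t,\tau,\xi)) = Y(t,\tau,\xi) + \tilde{Z}(t,(\tau,\xi)).
\]
This reduces the claim to a direct differentiation. Since $t\mapsto Y(t,\tau,\xi)$ solves (\ref{nolin1}), one has $\dot Y = A(t)Y + f(t,Y)$; and since $t\mapsto \tilde{Z}(t,(\tau,\xi))$ solves $\dot{z}=A(t)z+\tilde{F}(t,z,(\tau,\xi))$ with $\tilde{F}(t,\tilde{z},(\tau,\xi)) = f_0(t,\tilde{z}+Y(t,\tau,\xi)) - f(t,Y(t,\tau,\xi))$, summing the two derivatives gives
\[
\frac{d}{dt}\bigl[Y(t,\tau,\xi)+\tilde{Z}(t,(\tau,\xi))\bigr] = A(t)\bigl[Y+\tilde{Z}\bigr] + f_0\bigl(t,\tilde{Z}(t,(\tau,\xi))+Y(t,\tau,\xi)\bigr),
\]
which is precisely $\dot{w} = A(t)w + f_0(t,w)$ evaluated at $w = G(t,Y(t,\tau,\xi))$. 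So $t\mapsto G(t,Y(t,\tau,\xi))$ is a solution of (\ref{origen0}), as required.

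There is no real obstacle: the work was already done in proving that $\tilde{Z}$ is a bounded solution of the auxiliary equation and in establishing the invariance identity for $\tilde{Z}$ along orbits of (\ref{nolin1}). The only point requiring any care is the bookkeeping in the cancellation of the term $-f(t,Y(t,\tau,\xi))$ coming from $\tilde{F}$ against the nonlinearity of (\ref{nolin1}); this is exactly the reason the auxiliary perturbation $\tilde{F}$ was designed with that subtracted term in the first place.
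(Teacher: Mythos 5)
Your proof is correct and mirrors the paper's (implicit) argument exactly: it is the direct analogue of the proof given for the corresponding lemma about $H$, combining the invariance identity $\tilde{Z}(r,(t,Y(t,\tau,\xi)))=\tilde{Z}(r,(\tau,\xi))$ with the definition of $G$ and then differentiating, with the $-f(t,Y)$ term in $\tilde{F}$ cancelling the nonlinearity of (\ref{nolin1}) precisely as you note. You also silently and correctly read through an evident typo in the statement: $Y$ must be a solution of (\ref{nolin1}), not (\ref{lin1}), as is clear both from the Remark defining $Y$ and $\tilde{Z}$ and from the role of $G$ as the inverse of $H$.
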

\begin{lemma}
The map $\xi\mapsto G(\tau,\xi)$ is a continuous function for any fixed $\tau\in\R_0^{+}$.
\end{lemma}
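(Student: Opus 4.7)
The proof is a direct mirror of the preceding continuity lemma for $H$, with the roles of the two systems interchanged and $\tilde F, \tilde Z, Y$ substituted for $F, Z, X$ respectively. Since $G(\tau,\xi) = \xi + \tilde Z(\tau,(\tau,\xi))$, it suffices to prove that for any fixed $\tau_0 \in \R_0^+$ the map $\xi \mapsto \tilde Z(\tau,(\tau_0,\xi))$ is continuous. By the Remark preceding this lemma, $\tilde Z(\cdot,(\tau_0,\xi))$ is the unique fixed point in $(\boldsymbol{C},\|\cdot\|_A)$ of the operator built from $\tilde F$ via Proposition \ref{propespaciobanach}, and therefore is obtained as the uniform (in $\|\cdot\|_A$) limit of successive approximations $\tilde Z_j$ starting from a constant initial iterate $\tilde Z_0 = \phi_0 \in \boldsymbol{C}$ that does not depend on $\xi$.

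The first step is the triangle-inequality split: given $\varepsilon>0$, choose $j=j(\varepsilon)$ large enough that the two tail terms $\|\tilde Z(\tau,(\tau_0,\xi))-\tilde Z_j(\tau,(\tau_0,\xi))\|$ and $\|\tilde Z_j(\tau,(\tau_0,\xi'))-\tilde Z(\tau,(\tau_0,\xi'))\|$ are each below $\varepsilon/3$, which is possible uniformly in $\xi,\xi'$ by the geometric contraction estimate in Proposition \ref{propespaciobanach}. It then remains to control the middle term $\|\tilde Z_j(\tau,(\tau_0,\xi))-\tilde Z_j(\tau,(\tau_0,\xi'))\|$, which we do by induction on $j$: the base case is trivial since $\tilde Z_0$ is constant in $\xi$, and in the inductive step we estimate
$$\|\tilde Z_{j+1}(\tau,(\tau_0,\xi))-\tilde Z_{j+1}(\tau,(\tau_0,\xi'))\| \leq \int_0^\tau \|\Phi(\tau,s)\|\,\bigl\|\tilde F(s,\tilde Z_j(s,(\tau_0,\xi)),(\tau_0,\xi))-\tilde F(s,\tilde Z_j(s,(\tau_0,\xi')),(\tau_0,\xi'))\bigr\|\,ds.$$
Writing $\tilde F(t,\tilde y,(\tau,\xi)) = f(t,\tilde y+Y(t,\tau,\xi))-f(t,0)-f(t,Y(t,\tau,\xi))$, the integrand is bounded, using the Lipschitz property of $f$ and the nonuniform exponential dichotomy \textbf{(P3)} of $\Phi$, by a sum of two pieces: one proportional to $\|\tilde Z_j(s,(\tau_0,\xi))-\tilde Z_j(s,(\tau_0,\xi'))\|$ (handled by the inductive hypothesis via $\delta_j$) and one proportional to $\|Y(s,\tau_0,\xi)-Y(s,\tau_0,\xi')\|$.

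The main obstacle is estimating $\|Y(s,\tau_0,\xi)-Y(s,\tau_0,\xi')\|$ on the finite interval $[0,\tau]$, since $Y$ solves the unbounded nonlinear system (\ref{nolin1}) rather than a system in $\mathcal{A}_2$, so Proposition \ref{propcongronwall} does not apply verbatim. This is resolved exactly as in the preceding lemma for $H$: the integral formulation of $Y$, the nonuniformly bounded growth \textbf{(P2)} of $\Phi$, and the Lipschitz estimate $\|f(t,u)-f(t,v)\| \leq L_f \exp(-2\beta t)\|u-v\|$ combine, via Gronwall's inequality, to yield a bound of the form $\|Y(s,\tau_0,\xi)-Y(s,\tau_0,\xi')\| \leq \|\xi-\xi'\|\exp(L_f(\tau-s))$ on the compact interval $s \in [0,\tau]$. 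Substituting this bound and using $KL_f/\alpha < 1$ from (\ref{cotasLf}) gives an estimate of the shape
$$\|\tilde Z_{j+1}(\tau,(\tau_0,\xi))-\tilde Z_{j+1}(\tau,(\tau_0,\xi'))\| \leq \frac{\varepsilon}{3}\cdot\frac{KL_f}{\alpha} + \frac{K(2L_f)}{\alpha}\exp(L_f\tau)\|\xi-\xi'\|,$$
which is less than $\varepsilon/3$ provided one sets
$$\delta_{j+1} = \min\!\left\{\delta_j,\;\Bigl(1-\tfrac{KL_f}{\alpha}\Bigr)\exp(-L_f\tau)\,\tfrac{\alpha}{K(2L_f)}\,\tfrac{\varepsilon}{3}\right\}.$$
This closes the induction and, combined with the earlier triangle-inequality split, proves continuity of $\xi \mapsto \tilde Z(\tau,(\tau_0,\xi))$ and hence of $\xi \mapsto G(\tau,\xi)$.
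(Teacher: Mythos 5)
Your argument is correct and is exactly what the paper intends: the paper states that this lemma (and its two companions) is "similar to the previous one," and your proof is the faithful mirror of the continuity lemma for $H$, with $\tilde F,\tilde Z,Y$ in place of $F,Z,X$. One minor caution, which you inherit from the paper's own treatment of the $H$-lemma: the quantitative Gronwall bound $\|Y(s,\tau_0,\xi)-Y(s,\tau_0,\xi')\|\le\|\xi-\xi'\|\exp(L_f(\tau-s))$ is not literally what the integral equation for (\ref{nolin1}) yields, since $\Phi$ enters through the growth constants of \textbf{(P2)}; what is actually true and suffices is a bound of the form $\|Y(s,\tau_0,\xi)-Y(s,\tau_0,\xi')\|\le C(\tau)\|\xi-\xi'\|$ uniformly in $s\in[0,\tau]$, so the conclusion and the choice of $\delta_{j+1}$ go through after adjusting the constant.
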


Finally, from all these Lemmas, we can conclude that the systems (\ref{nolin1}) and (\ref{origen0}) are topologically equivalent, which is enough to prove the result.


\end{document}